\newtheorem{theorem}{Theorem}[section] % for theorem
\newtheorem{lemma}[theorem]{Lemma} % for lemma
\newtheorem{conjecture}[theorem]{Conjecture} % for conjecture
\providecommand{\keywords}[1]{\textbf{\textit{Keywords---}} #1} % for keywords
\title{The independence polynomial of trees is not always log-concave starting from order 26}
\author{
        Ohr Kadrawi \\
        Department of Mathematics\\
        Ariel University\\
        Ariel 4070000, \underline{Israel}\\
        orka@ariel.ac.il
        \and
        Vadim E. Levit \\
        Department of Mathematics\\
        Ariel University\\
        Ariel 4070000, \underline{Israel}\\
        levitv@ariel.ac.il
}
\date{}
\begin{document}

\maketitle

\begin{abstract}
    An independent set in a graph is a collection of vertices that are not adjacent to each other. The cardinality of the largest independent set in $G$ is represented by $\alpha(G)$. The independence polynomial of a graph $G = (V, E)$ was introduced by Gutman and Harary in 1983 and is defined as
    \[
    I(G;x) = \sum_{k=0}^{\alpha(G)}{s_k}x^{k}={s_0}+{s_1}x+{s_2}x^{2}+...+{s_{\alpha(G)}}x^{\alpha(G)},
    \]
    where $s_k$ represents the number of independent sets in $G$ of size $k$. The conjecture made by Alavi, Malde, Schwenk, and Erd{\"o}s in 1987 stated that the independence polynomials of trees are unimodal, and many researchers believed that this conjecture could be strengthened up to its corresponding log-concave version. However, in our paper, we present evidence that contradicts this assumption by introducing infinite families of trees whose independence polynomials are not log-concave.

\end{abstract}

\keywords{trees, independent set, independence polynomial, unimodality, log-concavity.}

\section{Introduction}

In this paper, we consider a simple graph $G = (V, E)$, which is a finite, undirected graph without any loops or multiple edges. The vertex set of $G$ is denoted by $V(G)$ and has cardinality $|V(G)| = n(G)$, while the edge set is denoted by $E(G)$ and has cardinality $|E(G)| = m(G)$.\\

For a given vertex $v \in V$, its \textit{neighborhood} is defined as the set of vertices adjacent to it, denoted as $N_G(v) = \{u: u \in V$ and $uv \in E\}$. The notation $N_G[v]$ denotes the closed neighborhood of $v$, which includes the vertex $v$ itself, such that $N_G[v] = N_G(v) \cup {v}$\\

The graph obtained by taking the \textit{disjoint union} of two graphs $G_1$ and $G_2$ is denoted by $G = G_1 \cup G_2$. This graph has a vertex set that is the disjoint union of the vertex sets of $G_1$ and $G_2$, and its edge set is the disjoint union of the edge sets of $G_1$ and $G_2$.\\

An \textit{independent} set or a \textit{stable set} in a graph $G$ is a set of vertices that are not adjacent to each other. A \textit{maximum independent set} is an independent set of the largest possible size in $G$. The cardinality of a maximum independent set is denoted by $\alpha(G)$ and is referred to as the \textit{independence number} of $G$.\\

The number of independent sets of cardinality $k$ in a graph $G$ is denoted by $s_k$. For instance, $s_0=1$ corresponds to the number of independent sets of minimum cardinality in $G$, which is represented by the empty set. The polynomial
\[
    I(G;x) = \sum_{k=0}^{\alpha(G)}{s_k}x^{k}={s_0}+{s_1}x+{s_2}x^{2}+...+{s_{\alpha(G)}}x^{\alpha(G)},
\]
 is known as the \textit{independence polynomial} of the graph $G$ \cite{GutmanHarary1983}, it is also referred to as the independent set polynomial \cite{Hoede_Li1994} or the stable set polynomial \cite{ChudnovskySeymour2006}. Recent developments related to the independence polynomial can be found in \cite{BallGalvinHyryWeingartner2021, BasitGalvin2021}.\\

The independence polynomial $I(G;x)$ of a graph $G$ can be computed using the recursive formula shown in \cite{LevitMandrescu2005} and also in \cite{Arocha1984, GutmanHarary1983, Hoede_Li1994}:
        \begin{equation}
        \label{eq:independent-polynomial-rec}
            I(G; x) = I(G - v;x) + x \cdot I(G-N[v]; x).
        \end{equation}
        
For the disjoint union of graphs, the independence polynomial can be computed using the formula (\cite{Hoede_Li1994, LevitMandrescu2005}):
        \begin{equation}
        \label{eq:independent-polynomial-graphs}
            I(G_1 \cup G_2; x) = I(G_1;x) \cdot I(G_2;x).
        \end{equation}

A finite sequence of real numbers $(a_0, a_1, a_2, \dots , a_n)$ is said to be:
\begin{itemize}
    \item \textit{unimodal} if there exists a unique index $k \in {0, 1, \dots , n}$, called the \textit{mode} of the sequence, such that
    \[
    a_0 \leq \dots \leq a_{k-1} \leq a_k \geq a_{k+1} \geq \dots \geq a_n;
    \]
    the mode is \textit{unique} if $a_{k-1} < a_k > a_{k+1}$;
    \item \textit{logarithmically concave} (or simply, \textit{log-concave}) if the inequality
    \[
    a_k^2 \geq a_{k-1} \cdot a_{k+1}
    \]
    is satisfied for all $k \in {1, 2, \dots, n-1}$.

\end{itemize}

It is known that any log-concave sequence of positive numbers is also unimodal. Unimodal and log-concave sequences occur in many areas of mathematics, including algebra, combinatorics, graph theory, and geometry. For example, the sequence of binomial coefficients, presented in the $n$th row of Pascal's triangle is log-concave, for more uses see \cite{BenderCanfield1996, BorosMoll1999, Brenti1990, Brenti1994, BrownColbourn1994, Dukes2002, Stanley1989}. A considerable amount of literature has been published on the unimodality and the log-concavity of various polynomials defined on graphs \cite{BeatonBrown2022, ChudnovskySeymour2006, Hamidoune1990, Horrocks2002, LevitMandrescu2003, Schwenk1981}.\\

Alavi, Malde, Schwenk, and Erd{\"o}s conjectured that the independence polynomials of trees are unimodal \cite{AlaviMaldeSchwenkErdos1987}. Yosef, Mizrachi, and Kadrawi developed a method for computing the independence polynomials of trees on $n$ vertices using a database \cite{YosefMizrachiKadrawi2021}. It helped to validate both unimodality and log-concavity of independence polynomials of trees with up to $20$ vertices. Moreover, Radcliffe further verified that independence polynomials of trees up to $25$ vertices are log-concave \cite{Radcliffe}.\\

Over the years, there have been several attempts to extend the above conjecture. The first direction was for unimodality. In 2006, Levit and Mandrescu conjectured that the independence polynomials of K\"{o}nig-Egerv\'{a}ry graphs are unimodal \cite{LevitMandrescu2006}. In 2013, Bhattacharya and Kahn constructed a bipartite graph with non-unimodal independence polynomial and reported that this is the smallest counterexample that exists \cite{BhattacharyaKahn2013}. However, after two years, Schwenk found some additional examples that were even smaller than the graph constructed by Bhattacharya and Kahn \cite{Schwenk2015}.\\

The second direction was for log-concavity. In 2004, Levit and Mandrescu conjectured that the independence polynomials of every forest are log-concave \cite{LevitMandrescu2004}. In 2011, Galvin also suggested strengthening the unimodality conjecture to a corresponding log-concavity conjecture for trees, forests, and bipartite graphs \cite{Galvin2011}. As of 2023, the conjecture still remained open \cite{XieFengXu2023}. However, in the same year, Kadrawi, Levit, Yosef, and Mizrachi found a number of counterexamples to the log-concavity conjecture in \cite{KadrawiLevitYosefMizrachi2023}. This paper begins by using the counterexamples presented in \cite{KadrawiLevitYosefMizrachi2023} and expands upon them to create multiple new infinite families of counterexamples. These families significantly broaden the scope of the counterexamples found before and give  hope to finding all the counterexamples that are possible.

\section{Basic counterexamples}
To support the Alavi, Malde, Schwenk, and Erd{\"o}s conjecture \cite{AlaviMaldeSchwenkErdos1987}, it was verified that for trees with up to $25$ vertices, their independence polynomials are log-concave and, therefore, unimodal. This was also demonstrated in \cite{Radcliffe}. However, when the number of vertices in a tree reached $26$, all the trees were found to have unimodal independence polynomials, while only two trees were discovered to have non-log-concave independence polynomials \cite{KadrawiLevitYosefMizrachi2023}, which served as counterexamples to the conjectures made by Levit and Mandrescu \cite{LevitMandrescu2004} and Galvin \cite{Galvin2011}.

\begin{figure}[H]
    \centering
    \begin{tikzpicture}[scale=0.4]
        \tikzstyle{black}=[fill=black, draw=black, shape=circle, inner sep = 2]
        
		\node [style=black] (0) at (0, 2) {};
		\node [style=black] (1) at (1, 2) {};
		\node [style=black] (2) at (2, 2) {};
		\node [style=black] (3) at (0, 3) {};
		\node [style=black] (4) at (1, 3) {};
		\node [style=black] (5) at (2, 3) {};
		\node [style=black] (6) at (1, 5) {};
		\node [style=black] (7) at (3.25, 2) {};
		\node [style=black] (8) at (4.25, 2) {};
		\node [style=black] (9) at (5.25, 2) {};
		\node [style=black] (10) at (3.25, 3) {};
		\node [style=black] (11) at (4.25, 3) {};
		\node [style=black] (12) at (5.25, 3) {};
		\node [style=black] (13) at (4.75, 5) {};
		\node [style=black] (14) at (7.5, 2) {};
		\node [style=black] (15) at (8.5, 2) {};
		\node [style=black] (16) at (9.5, 2) {};
		\node [style=black] (17) at (7.5, 3) {};
		\node [style=black] (18) at (8.5, 3) {};
		\node [style=black] (19) at (9.5, 3) {};
		\node [style=black] (20) at (9, 5) {};
		\node [style=black] (21) at (4.75, 7) {};
		\node [style=black] (24) at (10.5, 2) {};
		\node [style=black] (25) at (10.5, 3) {};
		\node [style=black] (26) at (6.25, 2) {};
		\node [style=black] (27) at (6.25, 3) {};
        \node (28) at (0, 0) {};
		\node (29) at (4.75, 0) {$T_1$};

		\draw (3) to (0);
		\draw (4) to (1);
		\draw (5) to (2);
		\draw (6) to (5);
		\draw (6) to (4);
		\draw (6) to (3);
		\draw (10) to (7);
		\draw (11) to (8);
		\draw (12) to (9);
		\draw (13) to (12);
		\draw (13) to (11);
		\draw (13) to (10);
		\draw (20) to (17);
		\draw (20) to (18);
		\draw (20) to (19);
		\draw (17) to (14);
		\draw (18) to (15);
		\draw (19) to (16);
		\draw (21) to (20);
		\draw (21) to (13);
		\draw (21) to (6);
		\draw (20) to (25);
		\draw (25) to (24);
		\draw (13) to (27);
		\draw (27) to (26);
\end{tikzpicture}
\hspace{1cm}
\begin{tikzpicture}[scale=0.4]
        \tikzstyle{black}=[fill=black, draw=black, shape=circle, inner sep = 2]
		\node [style=black] (0) at (0, 2) {};
		\node [style=black] (1) at (1, 2) {};
		\node [style=black] (2) at (2, 2) {};
		\node [style=black] (3) at (0, 3) {};
		\node [style=black] (4) at (1, 3) {};
		\node [style=black] (5) at (2, 3) {};
		\node [style=black] (6) at (1, 5) {};
		\node [style=black] (7) at (3.25, 2) {};
		\node [style=black] (8) at (4.25, 2) {};
		\node [style=black] (9) at (5.25, 2) {};
		\node [style=black] (10) at (3.25, 3) {};
		\node [style=black] (11) at (4.25, 3) {};
		\node [style=black] (12) at (5.25, 3) {};
		\node [style=black] (13) at (4.25, 5) {};
		\node [style=black] (14) at (6.5, 2) {};
		\node [style=black] (15) at (7.5, 2) {};
		\node [style=black] (16) at (8.5, 2) {};
		\node [style=black] (17) at (6.5, 3) {};
		\node [style=black] (18) at (7.5, 3) {};
		\node [style=black] (19) at (8.5, 3) {};
		\node [style=black] (20) at (8, 5) {};
		\node [style=black] (21) at (4.25, 7) {};
		\node [style=black] (22) at (0, 1) {};
		\node [style=black] (23) at (0, 0) {};
		\node [style=black] (24) at (9.5, 2) {};
		\node [style=black] (25) at (9.5, 3) {};
		
		\node (26) at (4.25, 0) {$T_2$};

		\draw (3) to (0);
		\draw (4) to (1);
		\draw (5) to (2);
		\draw (6) to (5);
		\draw (6) to (4);
		\draw (6) to (3);
		\draw (10) to (7);
		\draw (11) to (8);
		\draw (12) to (9);
		\draw (13) to (12);
		\draw (13) to (11);
		\draw (13) to (10);
		\draw (20) to (17);
		\draw (20) to (18);
		\draw (20) to (19);
		\draw (17) to (14);
		\draw (18) to (15);
		\draw (19) to (16);
		\draw (21) to (20);
		\draw (21) to (13);
		\draw (21) to (6);
		\draw (0) to (22);
		\draw (22) to (23);
		\draw (20) to (25);
		\draw (25) to (24);

\end{tikzpicture}
    \caption{Two trees with 26 vertices whose independence polynomials are not log-concave.}
    \label{fig:two_trees}

\end{figure}
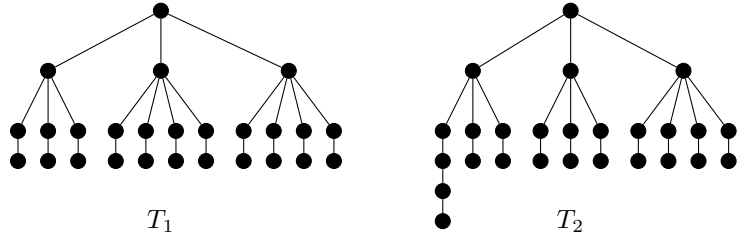
The independence polynomials of the trees $T_1$ and $T_2$ defined in \textbf{Figure \ref{fig:two_trees}} are as follows: 
\[
I(T_1;x) =\!\begin{multlined}[t]
    x^{14} + 51x^{13} + 2979x^{12} + 18683x^{11} + 55499x^{10} + 100144x^9 + \\
    121376x^8 + 103736x^7 + 63933x^6 + 28551x^5 + 9142x^4 + 2040x^3 + \\
    300x^2 + 26x + 1,
\end{multlined}
\]
where the non-log-concavity is demonstrated by the coefficient of $x^{13}$:\\ $51^2=2601<2979$, and
\[
I(T_2;x) =\!\begin{multlined}[t]
     x^{14} + 48x^{13} + 2372x^{12} + 15498x^{11} + 48086x^{10} + 90178x^9 + \\
    112870x^8 + 98968x^7 + 62183x^6 + 28147x^5 + 9089x^4 + 2037x^3 + \\
    300x^2 + 26x + 1,
    \end{multlined}
\]
where the non-log-concavity is demonstrated by the coefficient of $x^{13}$: \\$48^2=2304<2372$.\\

\section{Extensions of T1}
\subsection{3,k,k structure}
The tree labeled as $T_1$ in \textbf{Figure \ref{fig:two_trees}} is part of a larger family of trees with the same property, namely, that their independence polynomials are not log-concave. These trees have a particular structure which we refer to as the \textit{3,k,k structure}, and it is depicted in \textbf{Figure \ref{fig:tree_3_k_K}} and explained below:
\begin{itemize}
    \item the tree has one center, denoted $v_0$ that is connected to three vertices $v_1, v_2, v_3$ 
    \item $v_1$ is connected to $K_2 \cup K_2 \cup K_2$
    \item $v_2$ is connected to $K_2 \cup \dots \cup K_2$, $k$ times
    \item $v_3$ is also connected to another $K_2 \cup \dots \cup K_2$, $k$ times
\end{itemize}

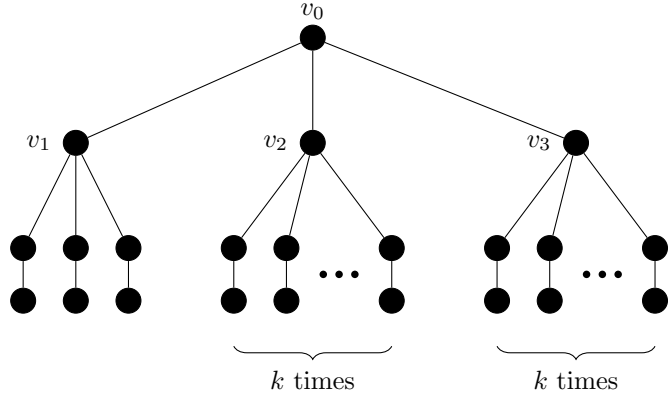
\begin{figure}[H]
    \centering
    \begin{tikzpicture}[scale=0.7]
        \tikzstyle{black}=[fill=black, draw=black, shape=circle]
        
		\node [style=black] (0) at (0, 0) {};
		\node [style=black] (1) at (1, 0) {};
		\node [style=black] (2) at (2, 0) {};
		\node [style=black] (3) at (0, 1) {};
		\node [style=black] (4) at (1, 1) {};
		\node [style=black] (5) at (2, 1) {};
		\node [style=black] (6) at (1, 3) {};
		\node [style=black] (7) at (4, 0) {};
		\node [style=black] (8) at (5, 0) {};
		\node [style=black] (9) at (7, 0) {};
		\node [style=black] (10) at (4, 1) {};
		\node [style=black] (11) at (5, 1) {};
		\node [style=black] (12) at (7, 1) {};
		\node [style=black] (13) at (5.5, 3) {};
		\node [style=black] (14) at (9, 0) {};
		\node [style=black] (15) at (10, 0) {};
		\node [style=black] (16) at (12, 0) {};
		\node [style=black] (17) at (9, 1) {};
		\node [style=black] (18) at (10, 1) {};
		\node [style=black] (19) at (12, 1) {};
		\node [style=black] (20) at (10.5, 3) {};
		\node [style=black] (21) at (5.5, 5) {};
		\draw node[fill,circle, inner sep=1] at (5.7,0.5) {};
		\draw node[fill,circle, inner sep=1] at (6,0.5) {};
		\draw node[fill,circle, inner sep=1] at (6.3,0.5) {};
		\draw node[fill,circle, inner sep=1] at (10.7,0.5) {};
		\draw node[fill,circle, inner sep=1] at (11,0.5) {};
		\draw node[fill,circle, inner sep=1] at (11.3,0.5) {};
		
        \draw [decorate,decoration={brace,amplitude=5pt,mirror,raise=4ex}] (4,0) -- (7,0) node[midway,yshift=-3em]{$k$ times};
        
        \draw [decorate,decoration={brace,amplitude=5pt,mirror,raise=4ex}] (9,0) -- (12,0) node[midway,yshift=-3em]{$k$ times};
		\draw (3) to (0);
		\draw (4) to (1);
		\draw (5) to (2);
		\draw (6) to (5);
		\draw (6) to (4);
		\draw (6) to (3);
		\draw (10) to (7);
		\draw (11) to (8);
		\draw (12) to (9);
		\draw (13) to (12);
		\draw (13) to (11);
		\draw (13) to (10);
		\draw (20) to (17);
		\draw (20) to (18);
		\draw (20) to (19);
		\draw (17) to (14);
		\draw (18) to (15);
		\draw (19) to (16);
		\draw (21) to (20);
		\draw (21) to (13);
		\draw (21) to (6);
		\node at (5.5,5.5) {$v_0$};
		\node at (0.3,3) {$v_1$};
		\node at (4.8,3) {$v_2$};
		\node at (9.8,3) {$v_3$};
\end{tikzpicture}

    \caption{An illustration of the 3,k,k structure of trees that have non-log-concave independence polynomials. }
    \label{fig:tree_3_k_K}
\end{figure}

\begin{lemma}[\cite{KadrawiLevitYosefMizrachi2023}]
    All trees of the $3,k,k$ structure, where $k \geq 4$, have non-log-concave independence polynomials.
\end{lemma}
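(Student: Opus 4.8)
The plan is to compute a closed form for $I(T;x)$ for an arbitrary tree $T$ of the $3,k,k$ structure and then exhibit a single index at which the coefficient sequence violates log-concavity. First I would record the independence polynomial of the basic gadget: let $S_m$ denote a vertex joined to exactly one endpoint of each of $m$ disjoint copies of $K_2$ (so $S_m$ is a spider with $m$ legs of length two). Applying the recursion \eqref{eq:independent-polynomial-rec} at the center of $S_m$ and then \eqref{eq:independent-polynomial-graphs} gives
\[
I(S_m;x) = (1+2x)^m + x(1+x)^m,
\]
and in particular $I(S_3;x) = x^4 + 11x^3 + 15x^2 + 7x + 1$.

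Next, I would apply \eqref{eq:independent-polynomial-rec} at the center $v_0$ of $T$ (see \textbf{Figure \ref{fig:tree_3_k_K}}). Here $T - v_0 = S_3 \cup S_k \cup S_k$, while $T - N[v_0]$ is a disjoint union of $2k+3$ copies of $K_2$, so \eqref{eq:independent-polynomial-graphs} yields
\[
I(T;x) = I(S_3;x)\cdot I(S_k;x)^2 + x\,(1+2x)^{2k+3}.
\]
From this I would read off that $\alpha(T) = 2k+6$ and extract the three top coefficients of $I(T;x)$: the second summand has degree $2k+4$, hence affects only $s_{2k+4}$, and the top coefficients of the first summand come from the leading terms of $I(S_3;x)$ and of $I(S_k;x)^2$, using only the expansion $I(S_k;x) = x^{k+1} + (2^k+k)x^k + \bigl(k2^{k-1}+\binom{k}{2}\bigr)x^{k-1}+\cdots$. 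This produces
\[
s_{2k+6}=1,\qquad s_{2k+5}=2^{k+1}+2k+11,\qquad s_{2k+4}=9\cdot 2^{2k}+(3k+22)2^k+2k^2+21k+15,
\]
which at $k=4$ equals $1,\ 51,\ 2979$, matching $I(T_1;x)$ above.

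Finally, since $s_{2k+6}=1$, log-concavity at index $2k+5$ would require $s_{2k+5}^2 \le s_{2k+4}$, and I would prove the opposite strict inequality for every $k \ge 4$. Substituting the formulas,
\[
s_{2k+4}-s_{2k+5}^2 = 5\cdot 2^{2k} - (5k+22)2^k - 2k^2 - 23k - 106,
\]
which is positive at $k=4$ and stays positive for all $k\ge 4$ because the term $5\cdot 4^k$ dominates; one can make this rigorous by bounding the larger root of the resulting quadratic in the variable $2^k$ by, say, $2k+7$, and noting $2^k > 2k+7$ for $k\ge 4$. This gives the desired failure of log-concavity for all $k \ge 4$. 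The only real obstacle is the bookkeeping in the coefficient extraction — tracking the binomial contributions to $s_{2k+4}$ carefully enough that the final inequality comes out in closed form; the recursion steps and the concluding inequality are routine.
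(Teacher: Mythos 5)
Your proposal is correct, and it follows exactly the method the paper uses for all the companion structures ($3,k,k{+}1$, $3,k,k{+}2$, $3^*,k,k{+}j$): apply the vertex recursion at $v_0$, factor $I(G-v_0)$ into the three spider polynomials, extract the top three coefficients (your values $1$, $2^{k+1}+2k+11$, $9\cdot 2^{2k}+(3k+22)2^k+2k^2+21k+15$ check out against $I(T_1;x)$ at $k=4$), and verify the inequality at the top index. The paper itself only cites this particular lemma from \cite{KadrawiLevitYosefMizrachi2023}, and your closing argument via the quadratic in $2^k$ is actually more rigorous than the numerical root comparison the paper uses in its analogous proofs.
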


\subsection{3,k,k+1 structure}
We now consider the possibility of expanding the graph by adding another $K_2$ to one of the clusters. This leads to a new structure, which is described below and illustrated in Figure \ref{fig:tree_3_k_K+1}:
\begin{itemize}
    \item the tree has one center, called $v_0$ that connect to three vertices $v_1, v_2, v_3$ 
    \item $v_1$ connect to $K_2 \cup K_2 \cup K_2$
    \item $v_2$ connect to $K_2 \cup \dots \cup K_2$, $k$ times
    \item $v_3$ also connect to another $K_2 \cup \dots \cup K_2$, $k+1$ times
\end{itemize}

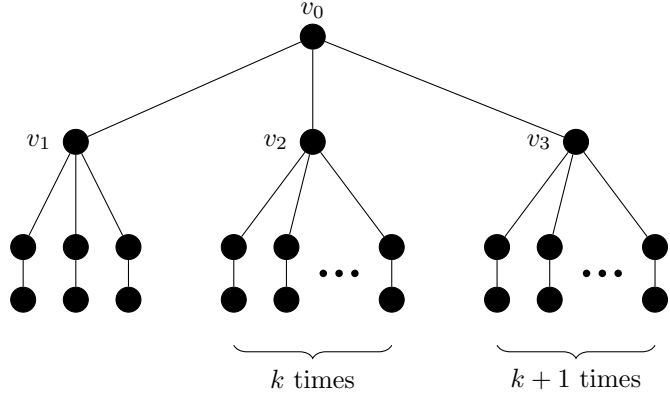
\begin{figure}[H]
    \centering
    \begin{tikzpicture}[scale=0.7]
        \tikzstyle{black}=[fill=black, draw=black, shape=circle]
        
		\node [style=black] (0) at (0, 0) {};
		\node [style=black] (1) at (1, 0) {};
		\node [style=black] (2) at (2, 0) {};
		\node [style=black] (3) at (0, 1) {};
		\node [style=black] (4) at (1, 1) {};
		\node [style=black] (5) at (2, 1) {};
		\node [style=black] (6) at (1, 3) {};
		\node [style=black] (7) at (4, 0) {};
		\node [style=black] (8) at (5, 0) {};
		\node [style=black] (9) at (7, 0) {};
		\node [style=black] (10) at (4, 1) {};
		\node [style=black] (11) at (5, 1) {};
		\node [style=black] (12) at (7, 1) {};
		\node [style=black] (13) at (5.5, 3) {};
		\node [style=black] (14) at (9, 0) {};
		\node [style=black] (15) at (10, 0) {};
		\node [style=black] (16) at (12, 0) {};
		\node [style=black] (17) at (9, 1) {};
		\node [style=black] (18) at (10, 1) {};
		\node [style=black] (19) at (12, 1) {};
		\node [style=black] (20) at (10.5, 3) {};
		\node [style=black] (21) at (5.5, 5) {};
		
		\draw node[fill,circle, inner sep=1] at (5.7,0.5) {};
		\draw node[fill,circle, inner sep=1] at (6,0.5) {};
		\draw node[fill,circle, inner sep=1] at (6.3,0.5) {};
		\draw node[fill,circle, inner sep=1] at (10.7,0.5) {};
		\draw node[fill,circle, inner sep=1] at (11,0.5) {};
		\draw node[fill,circle, inner sep=1] at (11.3,0.5) {};
		
        \draw [decorate,decoration={brace,amplitude=5pt,mirror,raise=4ex}] (4,0) -- (7,0) node[midway,yshift=-3em]{$k$ times};
        
        \draw [decorate,decoration={brace,amplitude=5pt,mirror,raise=4ex}] (9,0) -- (12,0) node[midway,yshift=-3em]{$k+1$ times};
		\draw (3) to (0);
		\draw (4) to (1);
		\draw (5) to (2);
		\draw (6) to (5);
		\draw (6) to (4);
		\draw (6) to (3);
		\draw (10) to (7);
		\draw (11) to (8);
		\draw (12) to (9);
		\draw (13) to (12);
		\draw (13) to (11);
		\draw (13) to (10);
		\draw (20) to (17);
		\draw (20) to (18);
		\draw (20) to (19);
		\draw (17) to (14);
		\draw (18) to (15);
		\draw (19) to (16);
		\draw (21) to (20);
		\draw (21) to (13);
		\draw (21) to (6);
		\node at (5.5,5.5) {$v_0$};
		\node at (0.3,3) {$v_1$};
		\node at (4.8,3) {$v_2$};
		\node at (9.8,3) {$v_3$};
\end{tikzpicture}

    \caption{Illustration of 3,k,k+1 structure of trees that have non-log-concave independence polynomials. }
    \label{fig:tree_3_k_K+1}
\end{figure}

\begin{theorem}
    All trees from $3,k,k+1$ structure, where $k \geq 4$, have non-log-concave independence polynomials. 
\end{theorem}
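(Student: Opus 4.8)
The plan is to derive an exact formula for the independence polynomial of an arbitrary tree $T$ of the $3,k,k+1$ structure and then to pinpoint an index at which $s_j^2<s_{j-1}s_{j+1}$; guided by the $3,k,k$ case (the preceding Lemma), where the failure sits at the very top of the polynomial, I expect the failing index here to be $\alpha(T)-1$.

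Set $p=1+2x$ and $q=1+x$. Each of the three branches of $T$ is a copy of the graph $A_m$ obtained from a single ``apex'' vertex by joining it to one endpoint of each of $m$ disjoint copies of $K_2$, with $m\in\{3,k,k+1\}$, and $v_0$ is joined to the three apexes $v_1,v_2,v_3$. Applying \eqref{eq:independent-polynomial-rec} at the apex of $A_m$ gives $I(A_m;x)=p^{m}+xq^{m}$ at once, since deleting the apex leaves $m$ copies of $K_2$ and deleting its closed neighbourhood leaves $m$ isolated vertices. Applying \eqref{eq:independent-polynomial-rec} at $v_0$ — where $T-v_0=A_3\cup A_k\cup A_{k+1}$ and $T-N[v_0]$ is $2k+4$ disjoint copies of $K_2$ — together with \eqref{eq:independent-polynomial-graphs}, gives
\[
I(T;x)=(p^{3}+xq^{3})(p^{k}+xq^{k})(p^{k+1}+xq^{k+1})+x\,p^{2k+4}.
\]
Expanding the triple product and combining the two monomials of equal $(p,q)$-degree through the identity $p+q=2+3x$ puts $I(T;x)$ into the six-term form
\[
I(T;x)=qp^{2k+4}+xp^{2k+1}q^{3}+xp^{k+3}q^{k}(p+q)+x^{2}p^{3}q^{2k+1}+x^{2}p^{k}q^{k+3}(p+q)+x^{3}q^{2k+4}.
\]

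From this form I would extract the three leading coefficients. The six summands have degrees $2k+5,\,2k+5,\,2k+5,\,2k+6,\,2k+6,\,2k+7$, so $\deg I(T;x)=2k+7$ (hence $\alpha(T)=2k+7$) and only the last summand reaches the top, giving $s_{2k+7}=1$. Adding the leading coefficients of the two summands of degree $2k+6$ to the next-to-leading coefficient of $x^{3}q^{2k+4}$ gives $s_{2k+6}=8+3\cdot2^{k}+(2k+4)=3\cdot2^{k}+2k+12$, and a short binomial extraction of the top (or appropriate lower) coefficient of each of the six summands gives $s_{2k+5}=18\cdot2^{2k}+2^{k-1}(9k+70)+2k^{2}+23k+26$. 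A sanity check at $k=4$ (where $T$ has $28$ vertices) returns $s_{2k+7}=1$, $s_{2k+6}=68$, $s_{2k+5}=5606$, matching a direct expansion of $I(T;x)$.

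Since $s_{2k+7}=1$, log-concavity at index $2k+6$ would require $s_{2k+6}^{2}\ge s_{2k+5}$; but a direct subtraction gives
\[
s_{2k+5}-s_{2k+6}^{2}=9\cdot2^{2k}-2^{k-1}(15k+74)-2k^{2}-25k-118,
\]
which is positive for every $k\ge4$: one checks $15k+74\le9\cdot2^{k}$ for $k\ge4$ (a one-line induction), so the first two terms already contribute at least $9\cdot2^{2k-1}$, which dominates $2k^{2}+25k+118$. Hence $s_{2k+6}^{2}<s_{2k+5}s_{2k+7}$ and $I(T;x)$ is not log-concave. The step I expect to be the real obstacle is the middle one: correctly matching each of the six summands to the coefficients $s_{2k+7},s_{2k+6},s_{2k+5}$ — a single misjudged degree would corrupt $s_{2k+6}$ — and carrying out the binomial extraction for $s_{2k+5}$ without error; the closing inequality is then routine because its leading exponential term fixes the sign.
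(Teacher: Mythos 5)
Your proof is correct and follows essentially the same route as the paper: apply the recursion \eqref{eq:independent-polynomial-rec} at $v_0$ to get $(p^{3}+xq^{3})(p^{k}+xq^{k})(p^{k+1}+xq^{k+1})+xp^{2k+4}$, extract $s_{2k+7}=1$, $s_{2k+6}=3\cdot2^{k}+2k+12$ and $s_{2k+5}=18\cdot2^{2k}+2^{k-1}(9k+70)+2k^{2}+23k+26$ (identical to the paper's $9\cdot2^{2k+1}+\dots$), and verify $s_{2k+6}^{2}<s_{2k+5}\,s_{2k+7}$; your six-term $(p,q)$-regrouping is only a bookkeeping variant of the paper's enumeration of contributing products. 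If anything, your closing step (the induction $15k+74\le 9\cdot2^{k}$ for $k\ge4$) is more rigorous than the paper's appeal to the numerical crossing point $k\approx 3.2329$.
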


\begin{proof}
Let compute the independence polynomial $3,k,k+1$-structure, and choose $v_0$ to be the first vertex to remove from the graph.
\[I(G)_{v_0} = I(G-v_0)+x \cdot I(G-N[v_0])\]
Expand the first term in that sum:
\[
\begin{split}
I(G-v_0) &= \!\begin{multlined}[t]
[(2x+1)^{k+1}+x (x+1)^{k+1}] \cdot [(2x+1)^k+x (x+1)^k] \cdot \\
[(2x+1)^3+x (x+1)^3]
\end{multlined}\\
&= \!\begin{multlined}[t]
\bigg[\sum_{i=0}^{k+1} \binom{k+1}{i} (2x)^{i} +x\sum_{i=0}^{k+1} \binom{k+1}{i} x^{i}\bigg] \cdot \\ 
\bigg[\sum_{i=0}^{k} \binom{k}{i} (2x)^{i} +x\sum_{i=0}^{k} \binom{k}{i} x^{i}\bigg] \\
\cdot [(2x+1)^3+x (x+1)^3]
\end{multlined}\\
&= \!\begin{multlined}[t]
[x^{k+2}+(2^{k+1}+k+1)x^{k+1}+((k+1)\cdot2^k+\frac{k(k+1)}{2})x^k+\dots] \cdot \\
[x^{k+1}+(2^k+k)x^k+(k\cdot2^{k-1}+\frac{k(k-1)}{2})x^{k-1}+\dots] \cdot \\
[x^4+11x^3+15x^2+ \dots]
\end{multlined}\\
\end{split}
\]

We can divide the first term expression into three factors $A \cdot B \cdot C$ such that:
\begin{itemize}
    \item $A = [x^{k+2}+(2^{k+1}+k+1)x^{k+1}+((k+1)\cdot2^k+\frac{k(k+1)}{2})x^k+\dots]$
    \item $B = [x^{k+1}+(2^k+k)x^k+(k\cdot2^{k-1}+\frac{k(k-1)}{2})x^{k-1}+\dots]$
    \item $C = [x^4+11x^3+15x^2+ \dots]$
\end{itemize}

Expand the second term in that sum:
\[
\begin{split}
I(G-N[v_0]) &= x \cdot (2x+1)^{2k+4}\\
&= x \cdot \bigg[\sum_{i=0}^{2k+4} \binom{2k+4}{i} (2x)^{i}\bigg]\\
&= x \cdot [(2k)^{2k+4}+\dots]\\
&= 2^{2k+4}x^{2k+5}+\dots
\end{split}
\]

The highest exponent that can reach is $2k+7$, from taking the biggest exponent from every factor, from factor $A$ we multiply by $x^{k+2}$, from factor $B$ we multiply by $x^{k+1}$ and from $C$ factor we multiply by $x^4$, so 
\[
x^{k+2} \cdot x^{k+1} \cdot x^4 = x^{2k+7},
\]
where the final coefficient is equal to $1$.\\

To calculate the coefficient of $X^{2k+6}$ we have 3 options:
\begin{itemize}
    \item multiply $x^{k+2}$ from $A$, $x^{k+1}$ from $B$ and $11x^3$ from $C$
    \item multiply $x^{k+2}$ from $A$, $(2^k+k)x^k$ from $B$ and $x^4$ from $C$
    \item multiply $(2^{k+1}+k+1)x^{k+1}$ from $A$, $x^{k+1}$ from $B$ and $x^4$ from $C$
\end{itemize}

\[
\!\begin{multlined}[t]
x^{k+2} \cdot x^{k+1} \cdot 11x^3 + x^{k+2} \cdot (2^k+k)x^k \cdot x^4 + (2^{k+1}+k+1)x^{k+1} \cdot x^{k+1} \cdot x^4 =\\
(3\cdot2^k + 2k + 12) x^{2k + 6},
\end{multlined}
\]

where the final coefficient is equal to $(3\cdot 2^k +2k+ 12)$.\\

To calculate the coefficient of $x^{2k+5}$ we have 6 options:

\begin{itemize}
    \item multiply $x^{k+2}$ from $A$, $x^{k+1}$ from $B$ and $15x^2$ from $C$
    \item multiply $x^{k+2}$ from $A$, $(2^k+k)x^k$ from $B$ and $11x^3$ from $C$
    \item multiply $x^{k+2}$ from $A$, $(k\cdot2^{k-1}+\frac{k(k-1)}{2})x^{k-1}$ from $B$ and $x^4$ from $C$
    \item multiply $(2^{k+1}+k+1)x^{k+1}$ from $A$, $x^{k+1}$ from $B$ and $11x^3$ from $C$
    \item multiply $(2^{k+1}+k+1)x^{k+1}$ from $A$, $(2^k+k)x^k$ from $B$ and $x^4$ from $C$
    \item multiply $((k+1)\cdot2^k+\frac{k(k+1)}{2})x^k$ from $A$, $x^{k+1}$ from $B$ and $x^4$ from $C$
\end{itemize} 
and also we add the coefficient $2^{2k+4}$ from the second term.

\[
\begin{multlined}
x^{k+2} \cdot x^{k+1} \cdot 15x^2 + 
x^{k+2} \cdot (2^k+k)x^k \cdot 11x^3 + \\
x^{k+2} \cdot (k\cdot2^{k-1}+\frac{k(k-1)}{2})x^{k-1} \cdot x^4 + \\
(2^{k+1}+k+1)x^{k+1} \cdot x^{k+1} \cdot 11x^3 + 
(2^{k+1}+k+1)x^{k+1} \cdot (2^k+k)x^k \cdot x^4 + \\
((k+1)\cdot2^k+\frac{k(k+1)}{2})x^k \cdot x^{k+1} \cdot x^4 + 
2^{2k+4}x^{2k+5}=\\
2\cdot k^2 + 23k + 9\cdot2^{2 k + 1} + 2^{k - 1} (9 k + 70) + 26,
\end{multlined}
\]

where the final coefficient is equal to $2\cdot k^2 + 23k + 9\cdot2^{2 k + 1} + 2^{k - 1} (9 k + 70) + 26$.\\

So the independence polynomial is:

%\begin{flushleft}
\begin{multlined}[t]
I(G) = x^{2k+7} + (3\cdot 2^k +2k+ 12)x^{2k+6} + 
\\
[2\cdot k^2 + 23k + 9\cdot2^{2 k + 1} + 2^{k - 1} (9 k + 70) + 26]x^{2k+5}+\dots
\end{multlined}
%\end{flushleft}

Now, lets prove the non-log-concave of the $x^{2k+6}$ term in the independence polynomial:
\[
(3\cdot 2^k +2k+ 12)^2 < 1 \cdot [2\cdot k^2 + 23k + 9\cdot2^{2 k + 1} + 2^{k - 1} (9 k + 70) + 26]
\]
The left-hand side and the right-hand side equals $k \approx 3.2329$ so the left-hand side is smaller than the right-hand side from $k=4$ and above.
\end{proof}

\subsection{3,k,k+2 structure}
Can this structure be further expanded? Let us consider adding another $K_2$ to the last cluster. In this scenario, the resulting structure can be described as follows and illustrated in Figure \ref{fig:tree_3_k_K+2}:
\begin{itemize}
    \item the tree has one center, called $v_0$ that connect to three vertices $v_1, v_2, v_3$ 
    \item $v_1$ connect to $K_2 \cup K_2 \cup K_2$
    \item $v_2$ connect to $K_2 \cup \dots \cup K_2$, $k$ times
    \item $v_3$ also connect to another $K_2 \cup \dots \cup K_2$, $k+2$ times
\end{itemize}

\begin{figure}[H]
    \centering
    \begin{tikzpicture}[scale=0.7]
        \tikzstyle{black}=[fill=black, draw=black, shape=circle]
        
		\node [style=black] (0) at (0, 0) {};
		\node [style=black] (1) at (1, 0) {};
		\node [style=black] (2) at (2, 0) {};
		\node [style=black] (3) at (0, 1) {};
		\node [style=black] (4) at (1, 1) {};
		\node [style=black] (5) at (2, 1) {};
		\node [style=black] (6) at (1, 3) {};
		\node [style=black] (7) at (4, 0) {};
		\node [style=black] (8) at (5, 0) {};
		\node [style=black] (9) at (7, 0) {};
		\node [style=black] (10) at (4, 1) {};
		\node [style=black] (11) at (5, 1) {};
		\node [style=black] (12) at (7, 1) {};
		\node [style=black] (13) at (5.5, 3) {};
		\node [style=black] (14) at (9, 0) {};
		\node [style=black] (15) at (10, 0) {};
		\node [style=black] (16) at (12, 0) {};
		\node [style=black] (17) at (9, 1) {};
		\node [style=black] (18) at (10, 1) {};
		\node [style=black] (19) at (12, 1) {};
		\node [style=black] (20) at (10.5, 3) {};
		\node [style=black] (21) at (5.5, 5) {};
		
		\draw node[fill,circle, inner sep=1] at (5.7,0.5) {};
		\draw node[fill,circle, inner sep=1] at (6,0.5) {};
		\draw node[fill,circle, inner sep=1] at (6.3,0.5) {};
		\draw node[fill,circle, inner sep=1] at (10.7,0.5) {};
		\draw node[fill,circle, inner sep=1] at (11,0.5) {};
		\draw node[fill,circle, inner sep=1] at (11.3,0.5) {};
		
        \draw [decorate,decoration={brace,amplitude=5pt,mirror,raise=4ex}] (4,0) -- (7,0) node[midway,yshift=-3em]{$k$ times};
        
        \draw [decorate,decoration={brace,amplitude=5pt,mirror,raise=4ex}] (9,0) -- (12,0) node[midway,yshift=-3em]{$k+2$ times};
		\draw (3) to (0);
		\draw (4) to (1);
		\draw (5) to (2);
		\draw (6) to (5);
		\draw (6) to (4);
		\draw (6) to (3);
		\draw (10) to (7);
		\draw (11) to (8);
		\draw (12) to (9);
		\draw (13) to (12);
		\draw (13) to (11);
		\draw (13) to (10);
		\draw (20) to (17);
		\draw (20) to (18);
		\draw (20) to (19);
		\draw (17) to (14);
		\draw (18) to (15);
		\draw (19) to (16);
		\draw (21) to (20);
		\draw (21) to (13);
		\draw (21) to (6);
		\node at (5.5,5.5) {$v_0$};
		\node at (0.3,3) {$v_1$};
		\node at (4.8,3) {$v_2$};
		\node at (9.8,3) {$v_3$};
\end{tikzpicture}

    \caption{Illustration of 3,k,k+2 structure of trees that have non-log-concave independence polynomials. }
    \label{fig:tree_3_k_K+2}
\end{figure}
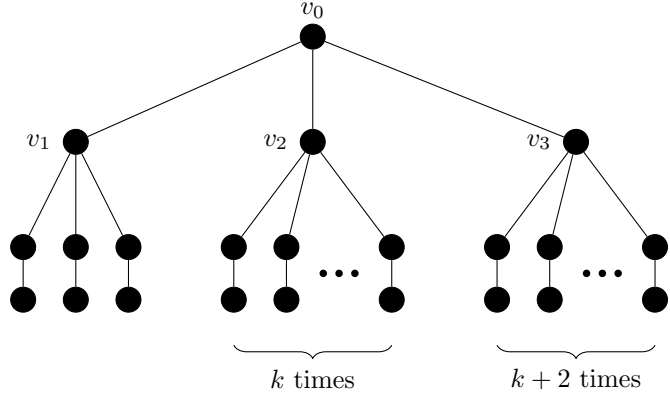

\begin{theorem}
    All trees from $3,k,k+2$ structure, where $k \geq 4$, have non-log-concave independence polynomials. 
\end{theorem}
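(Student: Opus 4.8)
The plan is to mirror the proof of the $3,k,k+1$ theorem almost verbatim, since the $3,k,k+2$ structure differs only in that the third cluster now contributes a factor $(2x+1)^{k+2}+x(x+1)^{k+2}$ instead of $(2x+1)^{k+1}+x(x+1)^{k+1}$. First I would apply the recursion \eqref{eq:independent-polynomial-rec} at the center $v_0$: deleting $v_0$ leaves three disjoint ``spider legs'' (each $v_i$ together with its attached copies of $K_2$), so by \eqref{eq:independent-polynomial-graphs} the term $I(G-v_0;x)$ factors as $A\cdot B\cdot C$ with
\[
A=(2x+1)^{k+2}+x(x+1)^{k+2},\quad B=(2x+1)^{k}+x(x+1)^{k},\quad C=(2x+1)^{3}+x(x+1)^{3},
\]
while $I(G-N[v_0];x)=x\,(2x+1)^{2k+5}$ because removing the closed neighborhood of $v_0$ destroys $v_1,v_2,v_3$ and leaves $2k+5$ isolated edges.

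Next I would extract the top three coefficients of $I(G;x)$, which has degree $2k+8$ (namely $x^{(k+3)}\cdot x^{(k+1)}\cdot x^{4}$ from $A\cdot B\cdot C$). The leading coefficient is $1$. For the coefficient of $x^{2k+7}$ I would collect the three contributing products: top-of-$A$ times top-of-$B$ times the $11x^3$ of $C$; top-of-$A$ times the subleading term of $B$ times $x^4$ of $C$; and the subleading term of $A$ times top-of-$B$ times $x^4$ of $C$ — here the subleading coefficient of $A$ is $2^{k+2}+(k+2)$ and of $B$ is $2^{k}+k$, exactly as in the previous proof but with $k$ shifted by one in the $A$-slot. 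For the coefficient of $x^{2k+6}$ I would enumerate the six $A\cdot B\cdot C$ products used before (replacing the binomial data of the $(k+1)$-cluster by that of the $(k+2)$-cluster) and add the contribution $2^{2k+5}$ coming from the $x\,(2x+1)^{2k+5}$ term. This is bookkeeping: the needed low-order coefficients of $(2x+1)^{m}+x(x+1)^{m}$ are $x^{m}$-coefficient $1$, $x^{m-1}$-coefficient $2^{m-1}+m$, $x^{m-2}$-coefficient $m\cdot 2^{m-2}+\binom{m}{2}$, and similarly $C=x^4+11x^3+15x^2+\dots$.

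Having the two coefficients $c_{2k+7}$ and $c_{2k+6}$ (and $c_{2k+8}=1$), I would then check the log-concavity inequality at the position $2k+7$, i.e. show $c_{2k+7}^{2}<c_{2k+8}\cdot c_{2k+6}=c_{2k+6}$. Both sides are explicit closed-form expressions in $k$ dominated by powers of $2$: the left side grows like $(\text{const}\cdot 2^{k})^{2}\sim \text{const}\cdot 4^{k}$ and the right side also grows like $\text{const}\cdot 4^{k}$, so the inequality reduces to comparing the leading $4^{k}$-coefficients (and, if those tie, lower-order terms). I would argue that the inequality holds for all sufficiently small threshold and in particular verify it at $k=4$ by direct substitution, then argue monotonicity of the ratio (right side minus left side, divided by $4^{k}$, is eventually increasing, or the difference is positive and its discrete derivative stays positive) to conclude it holds for all $k\ge 4$.

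The main obstacle I anticipate is purely computational rather than conceptual: correctly tracking the combinatorial coefficients through the shift from $k+1$ to $k+2$ in the large cluster and not dropping any of the six (for $x^{2k+6}$) contributing monomials, since a single sign or binomial slip would throw off the final inequality. A secondary subtlety is making the asymptotic comparison rigorous for \emph{all} $k\ge 4$ rather than just asymptotically — I would handle this by writing the difference (right side) $-$ (left side) as an explicit expression of the form $a\cdot 4^{k}+b\cdot k\,2^{k}+c\cdot 2^{k}+(\text{polynomial in }k)$ with $a>0$, checking the base case $k=4$, and showing each successive term dominates, which is elementary once the expression is in hand. If the crude leading-term comparison turns out to be too tight (as it was borderline — threshold near $3.23$ — in the $3,k,k+1$ case), I would instead solve the resulting inequality exactly as in that proof and observe that the real threshold lies strictly below $4$.
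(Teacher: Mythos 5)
Your proposal matches the paper's proof essentially step for step: the same recursion at $v_0$, the same factorization $A\cdot B\cdot C$ with $A=(2x+1)^{k+2}+x(x+1)^{k+2}$, the same contribution $2^{2k+5}x^{2k+6}$ from $x\,(2x+1)^{2k+5}$, and the same check of $c_{2k+7}^2< c_{2k+6}$, which the paper resolves by locating the real threshold near $3.617<4$. The only blemish is the off-by-one indexing in your generic formula for the coefficients of $(2x+1)^m+x(x+1)^m$ (the leading term is $x^{m+1}$, the subleading coefficient is $2^m+m$), but the specific values you actually use for $A$ and $B$ are correct, so the argument goes through as in the paper.
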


\begin{proof}
Let compute the independence polynomial $3,k,k+2$-structure, and choose $v_0$ to be the first vertex to remove from the graph.
\[I(G)_{v_0} = I(G-v_0)+x \cdot I(G-N[v_0])\]
Expand the first term in that sum:
\[
\begin{split}
I(G-v_0) &= \!\begin{multlined}[t]
[(2x+1)^{k+2}+x (x+1)^{k+2}] \cdot [(2x+1)^k+x (x+1)^k] \\
\cdot [(2x+1)^3+x (x+1)^3]
\end{multlined}\\
&= \!\begin{multlined}[t]
\bigg[\sum_{i=0}^{k+2} \binom{k+2}{i} (2x)^{i} +x\sum_{i=0}^{k+2} \binom{k+2}{i} x^{i}\bigg] \cdot \\ 
\bigg[\sum_{i=0}^{k} \binom{k}{i} (2x)^{i} +x\sum_{i=0}^{k} \binom{k}{i} x^{i}\bigg] \cdot \\
[(2x+1)^3+x (x+1)^3]
\end{multlined}\\
&= \!\begin{multlined}[t]
[x^{k+3}+(2^{k+2}+k+2)x^{k+2}+\\((k+2)\cdot2^{k+1}+\frac{(k+1)(k+2)}{2})x^{k+1}+\dots] \cdot \\
[x^{k+1}+(2^k+k)x^k+(k\cdot2^{k-1}+\frac{k(k-1)}{2})x^{k-1}+\dots] \cdot\\
 [x^4+11x^3+15x^2+ \dots]
\end{multlined}\\
\end{split}
\]

We can divide the first term expression into three factors $A \cdot B \cdot C$ such that:
\begin{itemize}
    \item $A = [x^{k+3}+(2^{k+2}+k+2)x^{k+2}+((k+2)\cdot2^{k+1}+\frac{(k+1)(k+2)}{2})x^{k+1}+\dots]$
    \item $B = [x^{k+1}+(2^k+k)x^k+(k\cdot2^{k-1}+\frac{k(k-1)}{2})x^{k-1}+\dots]$
    \item $C = [x^4+11x^3+15x^2+ \dots]$
\end{itemize}

Expand the second term in that sum:
\[
\begin{split}
I(G-N[v_0]) &= x \cdot (2x+1)^{2k+5}\\
&= x \cdot \bigg[\sum_{i=0}^{2k+5} \binom{2k+5}{i} (2x)^{i}\bigg]\\
&= x \cdot [(2k)^{2k+5}+\dots]\\
&= 2^{2k+5}x^{2k+6}+\dots
\end{split}
\]

The highest exponent that can reach is $2k+8$, from taking the biggest exponent from every factor, from factor $A$ we multiply by $x^{k+3}$, from factor $B$ we multiply by $x^{k+1}$ and from $C$ factor we multiply by $x^4$, so 
\[
x^{k+3} \cdot x^{k+1} \cdot x^4 = x^{2k+8},
\]
where the final coefficient is equal to $1$.\\

To calculate the coefficient of $X^{2k+7}$ we have 3 options:
\begin{itemize}
    \item multiply $x^{k+3}$ from $A$, $x^{k+1}$ from $B$ and $11x^3$ from $C$
    \item multiply $x^{k+3}$ from $A$, $(2^k+k)x^k$ from $B$ and $x^4$ from $C$
    \item multiply $(2^{k+2}+k+2)x^{k+2}$ from $A$, $x^{k+1}$ from $B$ and $x^4$ from $C$
\end{itemize}

\[
\!\begin{multlined}[t]
x^{k+3} \cdot x^{k+1} \cdot 11x^3 + x^{k+3} \cdot (2^k+k)x^k \cdot x^4 + (2^{k+2}+k+2)x^{k+2} \cdot x^{k+1} \cdot x^4 =\\
( 5 \cdot2^k +2k + 13)x^{2k + 7},
\end{multlined}
\]

where the final coefficient is equal to $5 \cdot2^k +2k + 13$.\\

To calculate the coefficient of $x^{2k+6}$ we have 6 options:

\begin{itemize}
    \item multiply $x^{k+3}$ from $A$, $x^{k+1}$ from $B$ and $15x^2$ from $C$
    \item multiply $x^{k+3}$ from $A$, $(2^k+k)x^k$ from $B$ and $11x^3$ from $C$
    \item multiply $x^{k+3}$ from $A$, $(k\cdot2^{k-1}+\frac{k(k-1)}{2})x^{k-1}$ from $B$ and $x^4$ from $C$
    \item multiply $(2^{k+2}+k+2)x^{k+2}$ from $A$, $x^{k+1}$ from $B$ and $11x^3$ from $C$
    \item multiply $(2^{k+2}+k+2)x^{k+2}$ from $A$, $(2^k+k)x^k$ from $B$ and $x^4$ from $C$
    \item multiply $((k+2)\cdot2^{k+1}+\frac{(k+1)(k+2)}{2})x^{k+1}$ from $A$, $x^{k+1}$ from $B$ and $x^4$ from $C$
\end{itemize} 
and also we add the coefficient $2^{2k+5}$ from the second term.

\[
\begin{multlined}
    x^{k+3} \cdot x^{k+1} \cdot 15x^2 + 
x^{k+3} \cdot (2^k+k)x^k \cdot 11x^3 + \\
x^{k+3} \cdot (k\cdot2^{k-1}+\frac{k(k-1)}{2})x^{k-1} \cdot x^4 + 
(2^{k+2}+k+2)x^{k+2} \cdot x^{k+1} \cdot 11x^3 + \\
(2^{k+2}+k+2)x^{k+2} \cdot (2^k+k)x^k \cdot x^4 + \\
((k+2)\cdot2^{k+1}+\frac{(k+1)(k+2)}{2})x^{k+1} \cdot x^{k+1} \cdot x^4 + 2^{2k+5}x^{2k+6}\\=
\frac{1}{2} (2\cdot(61\cdot2^k + 9\cdot4^{k + 1} + 38) + k (4k + 15\cdot2^k + 50)) x^{2 k + 6},
\end{multlined}
\]

where the final coefficient is equal to $\frac{1}{2} (2\cdot(61\cdot2^k + 9\cdot4^{k + 1} + 38) + k (4k + 15\cdot2^k + 50))$.\\

So the independence polynomial is:

%\begin{flushleft}
\begin{multlined}[t]
I(G) = x^{2k+8} + (5 \cdot2^k +2k + 13)x^{2k+7} + [\frac{1}{2} (2\cdot(61\cdot2^k + 9\cdot4^{k + 1} + 38)\\ + k (4k + 15\cdot2^k + 50))]x^{2k+6}+\dots
\end{multlined}
%\end{flushleft}

Now, lets prove the non-log-concave of the $x^{2k+6}$ term in the independence polynomial:
\[
(5 \cdot2^k +2k + 13)^2 < 1 \cdot [\frac{1}{2} (2\cdot(61\cdot2^k + 9\cdot4^{k + 1} + 38) + k (4k + 15\cdot2^k + 50))]
\]
The left-hand and right-hand sides equal $k \approx 3.61719$ so the left-hand side is smaller than the right-hand side from $k=4$ and above.
\end{proof}

\section{Extensions of T2}
\subsection{3*,k,k+1 structure}
Similar to $T_1$, $T_2$ in \textbf{Figure \ref{fig:two_trees}} is also just an instance of an infinite family of trees whose independence polynomials do not exhibit log-concavity. The left subtree of $T_2$ can be denoted as "3*". The structure of these trees, which we refer to as the \textit{3*,k,k+1 structure}, is described below and illustrated in Figure \ref{fig:tree_3*_k_K+1}:
\begin{itemize}
    \item the tree has one center, denoted $v_0$ that is connected to three vertices $v_1, v_2, v_3$ 
    \item $v_1$ is connected to $P_4 \cup K_2 \cup K_2$
    \item $v_2$ is connected to $K_2 \cup \dots \cup K_2$, $k$ times
    \item $v_3$ is connected to another $K_2 \cup \dots \cup K_2$, $k+1$ times
\end{itemize}

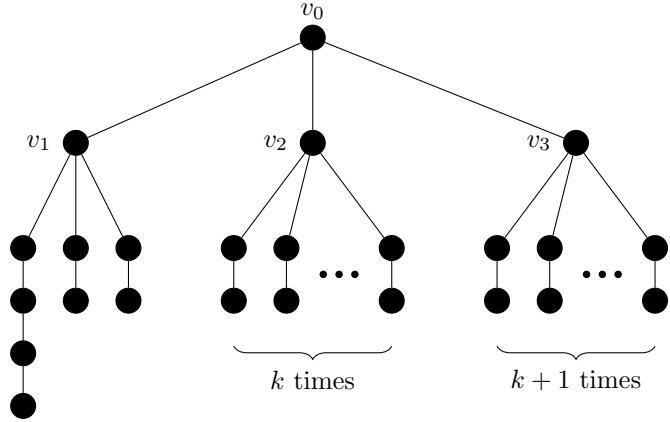
\begin{figure}[H]
    \centering
    \begin{tikzpicture}[scale=0.7]
        \tikzstyle{black}=[fill=black, draw=black, shape=circle]
        
		\node [style=black] (0) at (0, 0) {};
		\node [style=black] (1) at (1, 0) {};
		\node [style=black] (2) at (2, 0) {};
		\node [style=black] (3) at (0, 1) {};
		\node [style=black] (4) at (1, 1) {};
		\node [style=black] (5) at (2, 1) {};
		\node [style=black] (6) at (1, 3) {};
		\node [style=black] (7) at (4, 0) {};
		\node [style=black] (8) at (5, 0) {};
		\node [style=black] (9) at (7, 0) {};
		\node [style=black] (10) at (4, 1) {};
		\node [style=black] (11) at (5, 1) {};
		\node [style=black] (12) at (7, 1) {};
		\node [style=black] (13) at (5.5, 3) {};
		\node [style=black] (14) at (9, 0) {};
		\node [style=black] (15) at (10, 0) {};
		\node [style=black] (16) at (12, 0) {};
		\node [style=black] (17) at (9, 1) {};
		\node [style=black] (18) at (10, 1) {};
		\node [style=black] (19) at (12, 1) {};
		\node [style=black] (20) at (10.5, 3) {};
		\node [style=black] (21) at (5.5, 5) {};
		\node [style=black] (22) at (0, -1) {};
		\node [style=black] (23) at (0, -2) {};
		\draw node[fill,circle, inner sep=1] at (5.7,0.5) {};
		\draw node[fill,circle, inner sep=1] at (6,0.5) {};
		\draw node[fill,circle, inner sep=1] at (6.3,0.5) {};
		\draw node[fill,circle, inner sep=1] at (10.7,0.5) {};
		\draw node[fill,circle, inner sep=1] at (11,0.5) {};
		\draw node[fill,circle, inner sep=1] at (11.3,0.5) {};
		
        \draw [decorate,decoration={brace,amplitude=5pt,mirror,raise=4ex}] (4,0) -- (7,0) node[midway,yshift=-3em]{$k$ times};
        
        \draw [decorate,decoration={brace,amplitude=5pt,mirror,raise=4ex}] (9,0) -- (12,0) node[midway,yshift=-3em]{$k+1$ times};
		\draw (3) to (0);
		\draw (4) to (1);
		\draw (5) to (2);
		\draw (6) to (5);
		\draw (6) to (4);
		\draw (6) to (3);
		\draw (10) to (7);
		\draw (11) to (8);
		\draw (12) to (9);
		\draw (13) to (12);
		\draw (13) to (11);
		\draw (13) to (10);
		\draw (20) to (17);
		\draw (20) to (18);
		\draw (20) to (19);
		\draw (17) to (14);
		\draw (18) to (15);
		\draw (19) to (16);
		\draw (21) to (20);
		\draw (21) to (13);
		\draw (21) to (6);
		\draw (22) to (0);
		\draw (23) to (22);
		\node at (5.5,5.5) {$v_0$};
		\node at (0.3,3) {$v_1$};
		\node at (4.8,3) {$v_2$};
		\node at (9.8,3) {$v_3$};
\end{tikzpicture}

    \caption{An illustration of the 3*,k,k+1 structure of trees that have non-log-concave independence polynomials. }
    \label{fig:tree_3*_k_K+1}
\end{figure}

\begin{lemma}[\cite{KadrawiLevitYosefMizrachi2023}]
    All trees of the $3^*,k,k+1$ structure, where $k \geq 3$, have non-log-concave independence polynomials.  
\end{lemma}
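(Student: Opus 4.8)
The plan is to follow the same computational template used for the $3,k,k+1$ and $3,k,k+2$ theorems, with the sole modification that the pendant "$3$"-gadget attached to $v_1$ is replaced by the "$3^*$"-gadget, i.e. $P_4 \cup K_2 \cup K_2$ instead of $K_2 \cup K_2 \cup K_2$. Concretely, I would remove $v_0$ first and write $I(G;x) = I(G-v_0;x) + x\cdot I(G-N[v_0];x)$. The factor coming from the $v_2$-branch is $B = (2x+1)^k + x(x+1)^k$, the factor from the $v_3$-branch is $A = (2x+1)^{k+1} + x(x+1)^{k+1}$, and the only new ingredient is the factor $C$ replacing $(2x+1)^3 + x(x+1)^3$: using $I(P_4;x) = 1 + 4x + 3x^2$ and $I(K_2;x)=1+2x$, the $v_1$-branch contributes $C = (1+2x)^2(1+4x+3x^2) + x\cdot(1+x)^2(1+3x+x^2)$ (the second summand being $x$ times the independence polynomial of $(P_4 - N[\text{leaf}]) \cup K_1 \cup K_1$, which is $P_2 \cup K_1 \cup K_1$ after deleting the leaf's closed neighborhood — I would recompute this small polynomial carefully). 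The second term is $x\cdot(2x+1)^{(3+k)+(k+1)} = x(2x+1)^{2k+4+?}$; I need to be precise about how many $K_2$'s survive in $G-N[v_0]$ — removing $v_0$ and its three neighbors leaves exactly the $K_2$-clusters (the $P_4$ loses its attachment vertex and becomes a $P_3$), so $I(G-N[v_0];x) = x\cdot I(P_3;x)\cdot(2x+1)^{2k+2k'+2}$ for the appropriate cluster counts; again I would pin down the exact exponent before proceeding.

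Next I would extract the top three coefficients of $I(G;x)$, exactly mirroring the two proofs above. Letting $N$ be the degree of $I(G;x)$ (which will be $2k+7$, since the $3^*$-branch has independence number $1+1+2+... $ — I would compute $\alpha$ of the $P_4\cup K_2 \cup K_2$ piece together with $v_1$ carefully, expecting it to match the $3,k,k+1$ case because $P_4$ and $K_2\cup K_2$ have the same independence number $2$), the leading coefficient is $1$, and the coefficients of $x^{N-1}$ and $x^{N-2}$ are obtained by listing the finitely many ways to pick terms from $A$, $B$, $C$ (and from the $x\cdot I(G-N[v_0])$ term for the $x^{N-2}$ coefficient) whose exponents sum correctly. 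The needed low-order data are: the top three coefficients of $A$ (namely $1$, $2^{k+1}+k+1$, $(k+1)2^k + \binom{k+1}{2}$), of $B$ (namely $1$, $2^k+k$, $k2^{k-1}+\binom{k}{2}$), and of $C$ (its leading coefficient, which I expect to be $3$ from the $x^2$ term of $I(P_4;x)$ times the $x^2$ terms, and the next two), plus the leading coefficient $2^{2k+4}$ (or whatever the exponent turns out to be) of the second term. Assembling these gives $I(G;x) = x^{N} + a\,x^{N-1} + b\,x^{N-2} + \dots$ with $a$ and $b$ explicit polynomials in $k$ and $2^k$.

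Finally I would establish the non-log-concavity at the coefficient of $x^{N-1}$ by showing $a^2 < 1\cdot b$, i.e. $a^2 < b$. Since $a$ is linear in $2^k$ and $b$ is quadratic in $2^k$ (it contains a term like $9\cdot 4^{k}$ or similar coming from $(2^{k+1})(2^k)$ cross terms), the inequality $a^2 < b$ fails only for small $k$; I would solve it explicitly, expecting a threshold near $k\approx 2.something$ or $3.something$, and conclude it holds for all integers $k\ge 3$, matching the lemma statement. I would double-check the $k=3$ case numerically against the known polynomial of $T_2$ (which is the $k=3$, $3^*$-instance — here $T_2$ has the $3^*$ branch and clusters of sizes $k$ and $k+1$; comparing the degree-$14$ polynomial given in the excerpt, with $2k+7$ matching $13$... so in fact $N=13$ when $k=3$, meaning $\alpha$ of the $3^*$-branch-with-$v_1$ is $3$, one less than I guessed — I would recheck, likely $C$ has degree $3$ not $4$), and verify that $48^2 = 2304 < 2372$ is exactly the instance of $a^2 < b$ the general formula predicts.

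The main obstacle I anticipate is \emph{not} the final inequality (routine once $a,b$ are in hand) but getting the factor $C$ and the degree $N$ exactly right: the $P_4$ gadget behaves differently from $K_2\cup K_2$ in its low-order coefficients even though it may share an independence number, so I must carefully compute $I(\text{$v_1$-branch});x)$ via the deletion formula at $v_1$ — namely $(1+2x)^2 I(P_4;x) + x(1+x)^2 I(P_4 - N[u];x)$ where $u$ is the vertex of $P_4$ adjacent to $v_1$ (so $P_4 - N[u] = P_1$, giving $I = 1+x$) — and track enough terms of $C$, $A$, $B$ to pin down three coefficients of the product. A secondary subtlety is correctly counting the $K_2$-clusters surviving in $G - N[v_0]$ to get the exponent of $(2x+1)$ and the surviving $P_3$ from the broken $P_4$, since an off-by-one there corrupts the $x^{N-2}$ coefficient.
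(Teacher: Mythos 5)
Your overall plan is the right one: the paper does not actually prove this lemma (it is imported from \cite{KadrawiLevitYosefMizrachi2023}), but the proofs it gives for the sibling structures $3^*,k,k+2$, $3^*,k,k+3$ and $3^*,k,k$ follow exactly the template you describe, and the $C$-factor you write in your first paragraph, $C=(2x+1)^2(3x^2+4x+1)+x(x+1)^2(x^2+3x+1)$, is precisely the one the paper uses. However, several of the concrete facts you leave ``to be pinned down'' are stated incorrectly, and your own sanity check against $T_2$ pushes you further in the wrong direction. First, $\deg C=5$, not $3$ or $4$: the second summand $x(x+1)^2(x^2+3x+1)$ has degree $5$, so $C=x^5+17x^4+36x^3+\cdots$, and the total degree is $N=(k+2)+(k+1)+5=2k+8$. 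Your guess $N=2k+7$ is off by one, and when you compare with $I(T_2)$ you misread its degree as $13$ (it is $14$, matching $2k+8$ at $k=3$) and conclude that $C$ has degree $3$ --- that ``correction'' would corrupt every coefficient downstream. With the correct $C$, the coefficient of $x^{2k+7}$ comes out to $3\cdot 2^k+2k+18$, which equals $48$ at $k=3$, exactly matching $T_2$.

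Second, your description of $G-N[v_0]$ is wrong: deleting $N[v_0]=\{v_0,v_1,v_2,v_3\}$ leaves the $P_4$ leg \emph{intact} (none of its vertices is adjacent to $v_0$), together with $2+k+(k+1)=2k+3$ copies of $K_2$, so the second term is $x\,(3x^2+4x+1)(2x+1)^{2k+3}$, contributing $3\cdot 2^{2k+3}$ at $x^{2k+6}$; there is no surviving $P_3$ and no extra factor of $x$ inside $I(G-N[v_0])$. Third, a small but telling slip: for an endpoint $u$ of $P_4$ one has $P_4-N[u]=P_2$, not $P_1$, and in any case the graph relevant to $I(\mathrm{branch}-N[v_1])$ is $P_4-u=P_3$ (only $u$ itself is adjacent to $v_1$), whose polynomial $1+3x+x^2$ is the factor you in fact wrote. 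None of these errors invalidates the method --- once $C$, $N$, and the second term are corrected, extracting the top three coefficients and checking $a^2<b$ goes through exactly as in the paper's proofs of the neighbouring lemmas, with the threshold falling below $k=3$ --- but as written your proposal would assemble the wrong polynomial.
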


\subsection{3*,k,k+2 structure}
Following the success of extending $T_1$, we attempted to add another $K_2$ to the left sub-tree of $T_2$. The resulting structure is denoted as the \textit{3*, k, k+2 structure}, and is described below. The structure is also depicted in Figure \ref{fig:tree_3*_k_K+2}.
\begin{itemize}
    \item the tree has one center, denoted $v_0$ that is connected to three vertices $v_1, v_2, v_3$ 
    \item $v_1$ is connected to $P_4 \cup K_2 \cup K_2$
    \item $v_2$ is connected to $K_2 \cup \dots \cup K_2$, $k$ times
    \item $v_3$ is connected to another $K_2 \cup \dots \cup K_2$, $k+2$ times
\end{itemize}

\begin{figure}[H]
    \centering
    \begin{tikzpicture}[scale=0.7]
        \tikzstyle{black}=[fill=black, draw=black, shape=circle]
        
		\node [style=black] (0) at (0, 0) {};
		\node [style=black] (1) at (1, 0) {};
		\node [style=black] (2) at (2, 0) {};
		\node [style=black] (3) at (0, 1) {};
		\node [style=black] (4) at (1, 1) {};
		\node [style=black] (5) at (2, 1) {};
		\node [style=black] (6) at (1, 3) {};
		\node [style=black] (7) at (4, 0) {};
		\node [style=black] (8) at (5, 0) {};
		\node [style=black] (9) at (7, 0) {};
		\node [style=black] (10) at (4, 1) {};
		\node [style=black] (11) at (5, 1) {};
		\node [style=black] (12) at (7, 1) {};
		\node [style=black] (13) at (5.5, 3) {};
		\node [style=black] (14) at (9, 0) {};
		\node [style=black] (15) at (10, 0) {};
		\node [style=black] (16) at (12, 0) {};
		\node [style=black] (17) at (9, 1) {};
		\node [style=black] (18) at (10, 1) {};
		\node [style=black] (19) at (12, 1) {};
		\node [style=black] (20) at (10.5, 3) {};
		\node [style=black] (21) at (5.5, 5) {};
		\node [style=black] (22) at (0, -1) {};
		\node [style=black] (23) at (0, -2) {};
		\draw node[fill,circle, inner sep=1] at (5.7,0.5) {};
		\draw node[fill,circle, inner sep=1] at (6,0.5) {};
		\draw node[fill,circle, inner sep=1] at (6.3,0.5) {};
		\draw node[fill,circle, inner sep=1] at (10.7,0.5) {};
		\draw node[fill,circle, inner sep=1] at (11,0.5) {};
		\draw node[fill,circle, inner sep=1] at (11.3,0.5) {};
		
        \draw [decorate,decoration={brace,amplitude=5pt,mirror,raise=4ex}] (4,0) -- (7,0) node[midway,yshift=-3em]{$k$ times};
        
        \draw [decorate,decoration={brace,amplitude=5pt,mirror,raise=4ex}] (9,0) -- (12,0) node[midway,yshift=-3em]{$k+2$ times};
		\draw (3) to (0);
		\draw (4) to (1);
		\draw (5) to (2);
		\draw (6) to (5);
		\draw (6) to (4);
		\draw (6) to (3);
		\draw (10) to (7);
		\draw (11) to (8);
		\draw (12) to (9);
		\draw (13) to (12);
		\draw (13) to (11);
		\draw (13) to (10);
		\draw (20) to (17);
		\draw (20) to (18);
		\draw (20) to (19);
		\draw (17) to (14);
		\draw (18) to (15);
		\draw (19) to (16);
		\draw (21) to (20);
		\draw (21) to (13);
		\draw (21) to (6);
		\draw (22) to (0);
		\draw (23) to (22);
		\node at (5.5,5.5) {$v_0$};
		\node at (0.3,3) {$v_1$};
		\node at (4.8,3) {$v_2$};
		\node at (9.8,3) {$v_3$};
\end{tikzpicture}

    \caption{An illustration of the 3*,k,k+2 structure of trees that have non-log-concave independence polynomials. }
    \label{fig:tree_3*_k_K+2}
\end{figure}
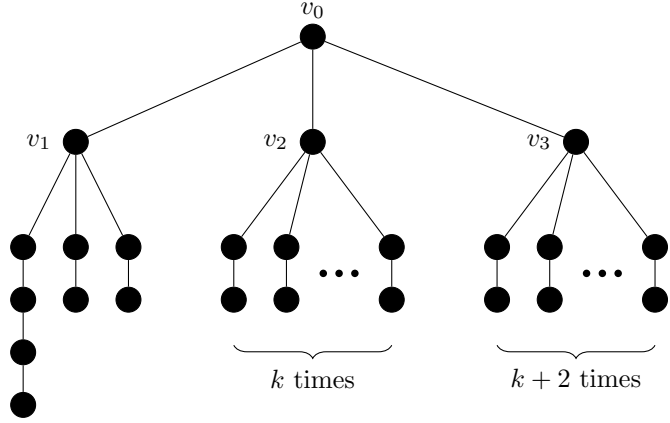

\begin{lemma}
    All trees of the $3^*,k,k+2$ structure, where $k \geq 3$, have non-log-concave independence polynomials.  
\end{lemma}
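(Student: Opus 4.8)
The plan is to mimic the proofs of the two preceding theorems: apply the deletion recursion \eqref{eq:independent-polynomial-rec} at the center $v_0$ and read off the top three coefficients of $I(G;x)$.

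First I would write $I(G;x)=I(G-v_0;x)+x\cdot I(G-N[v_0];x)$. Deleting $v_0$ breaks $G$ into the three pendant subtrees rooted at $v_1,v_2,v_3$, so by \eqref{eq:independent-polynomial-graphs} we have $I(G-v_0;x)=A\cdot B\cdot C$, where $B=(2x+1)^{k}+x(x+1)^{k}$ is the independence polynomial of the $v_2$-subtree, $A=(2x+1)^{k+2}+x(x+1)^{k+2}$ is that of the $v_3$-subtree (these are exactly the factors $A,B$ used in the $3,k,k+2$ proof), and $C$ is the independence polynomial of the $v_1$-subtree $T^{(1)}$, which now has the shape $v_1$ joined to $P_4\cup K_2\cup K_2$. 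Applying \eqref{eq:independent-polynomial-rec} to $T^{(1)}$ at $v_1$ and then \eqref{eq:independent-polynomial-graphs} yields
\[
C=(2x+1)^2\,I(P_4;x)+x(1+x)^2\,I(P_3;x)=x^5+17x^4+36x^3+28x^2+9x+1,
\]
using $I(P_4;x)=3x^2+4x+1$ and $I(P_3;x)=x^2+3x+1$. The same accounting gives $G-N[v_0]=P_4\cup K_2\cup K_2\cup(k\,K_2)\cup((k+2)\,K_2)$, hence $I(G-N[v_0];x)=(3x^2+4x+1)(2x+1)^{2k+4}$, so that $x\cdot I(G-N[v_0];x)$ has degree $2k+7$.

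Next I would extract the leading coefficients of $I(G;x)$. Since $\deg A=k+3$, $\deg B=k+1$, $\deg C=5$, and $x\cdot I(G-N[v_0];x)$ has degree only $2k+7$, we get $\alpha(G)=2k+9$, and the two topmost coefficients of $I(G;x)$ are produced entirely by $A\cdot B\cdot C$; expanding its top two layers gives
\[
s_{2k+9}=1,\qquad s_{2k+8}=(2^{k+2}+k+2)+(2^{k}+k)+17=5\cdot 2^{k}+2k+19 .
\]
For $s_{2k+7}$ I would list, exactly as in the earlier proofs, the six ways of losing a total of two in degree across $A\cdot B\cdot C$ and add the single contribution $3\cdot 2^{2k+4}$ coming from $x\cdot I(G-N[v_0];x)$; collecting terms, $s_{2k+7}$ becomes a polynomial in $k$ and $2^{k}$ whose dominant term is $52\cdot 4^{k}$.

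Finally, I would show that log-concavity fails at index $2k+8$, that is,
\[
\bigl(5\cdot 2^{k}+2k+19\bigr)^2 < s_{2k+9}\cdot s_{2k+7}=s_{2k+7}.
\]
The left side grows like $25\cdot 4^{k}$ and the right side like $52\cdot 4^{k}$, so the inequality holds for all large $k$; reducing it to a single inequality in the real variable $k$ shows that the relevant crossover lies strictly below $3$, so the inequality --- and hence the failure of log-concavity --- holds for every integer $k\ge 3$. At $k=3$ the check is $65^2=4225<4436$, and at $k=2$ it is $43^2=1849>1409$, which explains why the hypothesis is $k\ge 3$. I expect the genuine work to be the bookkeeping for $s_{2k+7}$: six multiplicative contributions from $A\cdot B\cdot C$, each built from the first, second, or third coefficient of each factor, together with the one term from $x\cdot I(G-N[v_0];x)$, all of which must be combined correctly before the crossover of the resulting mixed polynomial-exponential inequality can be pinned below $k=3$; the factorization, the gadget polynomials $I(P_3;x),I(P_4;x)$, and the degree count are routine.
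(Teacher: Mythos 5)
Your proposal is correct and follows essentially the same route as the paper: deletion at $v_0$, the factorization $I(G-v_0)=A\cdot B\cdot C$ with the same three factors (your $C=x^5+17x^4+36x^3+28x^2+9x+1$ and the degree count for $x\cdot I(G-N[v_0];x)$ match), the same top three coefficients $1$, $5\cdot 2^k+2k+19$, and a quantity with dominant term $52\cdot 4^k$, and the same crossover argument placing the failure of log-concavity at $k\ge 3$. Your spot checks at $k=3$ ($65^2=4225<4436$) and $k=2$ agree with the paper's closed form.
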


\begin{proof}
Let us compute the independence polynomial of a tree having $3^*,k,k+2$ structure, and choose $v_0$ to be the first vertex to remove from the graph.
\[I(G)_{v_0} = I(G-v_0)+x \cdot I(G-N[v_0])\]
Expand the first term in that sum:
\[
\begin{split}
I(G-v_0)&= \!\begin{multlined}[t]
[(2x+1)^{k+2}+x(x+1)^{k+2}] \cdot [(2x+1)^k+x(x+1)^k] \cdot \\
[(2x+1)^2(3x^2+4x+1)+x(x+1)^2(x^2+3x+1)]
\end{multlined}\\
&= \!\begin{multlined}[t]
\bigg[\sum_{i=0}^{k+2} \binom{k+2}{i} (2x)^{i} +x\sum_{i=0}^{k+2} \binom{k+2}{i} x^{i}\bigg] 
\cdot \\ \bigg[\sum_{i=0}^{k} \binom{k}{i} (2x)^{i} +x\sum_{i=0}^{k} \binom{k}{i} x^{i}\bigg] \cdot \\
 [(2x+1)^2(3x^2+4x+1)+x(x+1)^2(x^2+3x+1)]
\end{multlined}\\
&= \!\begin{multlined}[t]
[x^{k+3}+(2^{k+2}+k+2)x^{k+2}+((k+2)\cdot2^{k+1}+\\\frac{(k+1)(k+2)}{2})x^{k+1}+\dots] \cdot \\
[x^{k+1}+(2^k+k)x^k+(k\cdot2^{k-1}+\frac{k(k-1)}{2})x^{k-1}+\dots] \cdot \\
[x^5 + 17 x^4 + 36 x^3 + \dots]
\end{multlined}\\
\end{split}
\]

We can divide the first term expression into three factors $A \cdot B \cdot C$ such that:
\begin{itemize}
    \item $A = [x^{k+3}+(2^{k+2}+k+2)x^{k+2}+((k+2)\cdot2^{k+1}+\frac{(k+1)(k+2)}{2})x^{k+1}+\dots]$
    \item $B = [x^{k+1}+(2^k+k)x^k+(k\cdot2^{k-1}+\frac{k(k-1)}{2})x^{k-1}+\dots]$
    \item $C = [x^5 + 17 x^4 + 36 x^3 + \dots]$
\end{itemize}

Expand the second term in that sum:
\[
\begin{split}
x\cdot I(G-N[v_0]) &= x \cdot (2x+1)^{2k+4}\cdot(3x^2+4x+1)\\
&= x(3x^2+4x+1) \cdot \bigg[\sum_{i=0}^{2k+4} \binom{2k+4}{i} (2x)^{i}]\\
&= x(3x^2+4x+1) \cdot [(2x)^{2k+4}+\dots]\\
&= 3\cdot2^{2 k + 4}x^{2 k + 7}+\dots
\end{split}
\]

The highest exponent that one can reach is $2k+9$. We obtain it by taking the highest exponent from every factor, i.e., from factor $A$ we choose $x^{k+3}$, from factor $B$ we choose $x^{k+1}$, while from $C$ factor we choose $x^5$, so 
\[
x^{n+3} \cdot x^{n+1} \cdot x^5 = x^{2k+9},
\]
where the final coefficient is equal to $1$.\\

To calculate the coefficient of $x^{2k+8}$ we have 3 following options:
\begin{itemize}
    \item multiply $x^{k+3}$ from $A$, $x^{k+1}$ from $B$ and $17 x^4$ from $C$
    \item multiply $x^{k+3}$ from $A$, $(2^k+k)x^k$ from $B$ and $x^5$ from $C$
    \item multiply $(2^{k+2}+k+2)x^{k+2}$ from $A$, $x^{k+1}$ from $B$ and $x^5$ from $C$
\end{itemize}

\[
\begin{multlined}
x^{k+3} \cdot x^{k+1} \cdot 17 x^4+
x^{k+3} \cdot (2^k+k)x^k \cdot x^5+
(2^{k+2}+k+2)x^{k+2} \cdot x^{k+1} \cdot x^5
 \\= (2 k + 2^k + 2^{k + 2} + 19) x^{2 k + 8},
\end{multlined}
\]

where the final coefficient is equal to $2 k + 2^k + 2^{k + 2} + 19$.\\

To calculate the coefficient of $x^{2k+7}$ we have 6 following options:
\begin{itemize}
    \item multiply $x^{k+3}$ from $A$, $x^{k+1}$ from $B$ and $36 x^3$ from $C$
    \item multiply $x^{k+3}$ from $A$, $(2^k+k)x^k$ from $B$ and $17 x^4$ from $C$
    \item multiply $x^{k+3}$ from $A$, $(k\cdot2^{k-1}+\frac{k(k-1)}{2})x^{k-1}$ from $B$ and $x^5$ from $C$
    \item multiply $(2^{k+2}+k+2)x^{k+2}$ from $A$, $x^{k+1}$ from $B$ and $17 x^4$ from $C$
    \item multiply $(2^{k+2}+k+2)x^{k+2}$ from $A$, $(2^k+k)x^k$ from $B$ and $x^5$ from $C$
    \item multiply $((k+2)\cdot2^{k+1}+\frac{(k+1)(k+2)}{2})x^{k+1}$ from $A$, $x^{k+1}$ from $B$ and $x^5$ from $C$
\end{itemize} 
and also we add the coefficient $3\cdot2^{2 k + 4}$ from the second term.

\[
\begin{multlined}[t]
x^{k+3} \cdot x^{k+1} \cdot 36 x^3 + 
x^{k+3} \cdot (2^k+k)x^k \cdot 17 x^4 + \\
x^{k+3} \cdot (k\cdot2^{k-1}+\frac{k(k-1)}{2})x^{k-1} \cdot x^5
+(2^{k+2}+k+2)x^{k+2} \cdot x^{k+1} \cdot 17 x^4 + \\
(2^{k+2}+k+2)x^{k+2} \cdot (2^k+k)x^k \cdot x^5 \\
((k+2)\cdot2^{k+1}+\frac{(k+1)(k+2)}{2})x^{k+1} \cdot x^{k+1} \cdot x^5
+ 3\cdot2^{2 k + 4}x^{2 k + 7} \\
=\frac{1}{2} (4\cdot k^2 + 15k \cdot 2^k+ 74k + 91\cdot2^{k + 1} + 13\cdot2^{2 k + 3} + 142) x^{2 k + 7},
\end{multlined}
\]

where the final coefficient is equal to $\frac{1}{2} (4\cdot k^2 + 15k \cdot 2^k+ 74k + 91\cdot2^{k + 1} + 13\cdot2^{2 k + 3} + 142)$.\\

Thus the independence polynomial is:

%\begin{flushleft}
\begin{multlined}[t]
    I(G) = x^{2k+9} + (2k + 2^k + 2^{k + 2} + 19) x^{2k + 8} +\\
    \frac{1}{2} (4\cdot k^2 + 15k \cdot 2^k+ 74k + 91\cdot2^{k + 1} + 13\cdot2^{2 k + 3} + 142) x^{2 k + 7}+\dots
\end{multlined}
%\end{flushleft}

Now, let us prove the non-log-concavity of the $x^{2k+8}$ term in the independence polynomial:
\[
(2k + 2^k + 2^{k + 2} + 19)^2 < 1 \cdot \frac{1}{2} (4\cdot k^2 + 15k \cdot 2^k+ 74k + 91\cdot2^{k + 1} + 13\cdot2^{2 k + 3} + 142)
\]
The left-hand and right-hand sides are equal to $k \approx 2.83611$ so the left-hand side is smaller than the right-hand side from $k=3$ and above.

\end{proof}

\subsection{3*,k,k+3 structure}
To explore further extensions, we attempted to add another $K_2$ to the 3*,k,k+2 structure. The resulting structure is described below and depicted in Figure \ref{fig:tree_3*_k_K+3}:
\begin{itemize}
    \item the tree has one center, denoted $v_0$ that is connected to three vertices $v_1, v_2, v_3$ 
    \item $v_1$ is connected to $P_4 \cup K_2 \cup K_2$
    \item $v_2$ is connected to $K_2 \cup \dots \cup K_2$, $k$ times
    \item $v_3$ is connected to another $K_2 \cup \dots \cup K_2$, $k+3$ times
\end{itemize}

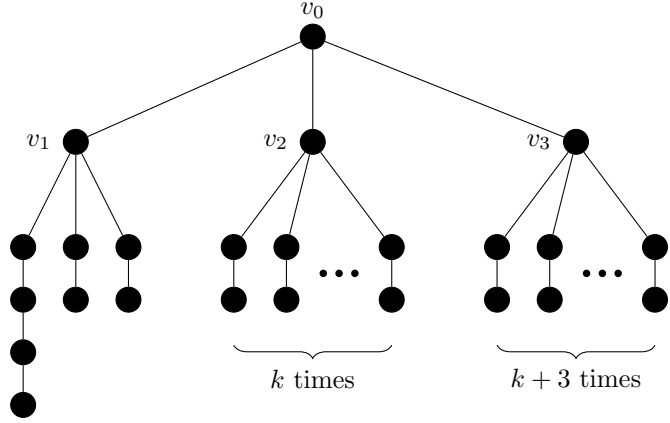
\begin{figure}[H]
    \centering
    \begin{tikzpicture}[scale=0.7]
        \tikzstyle{black}=[fill=black, draw=black, shape=circle]
        
		\node [style=black] (0) at (0, 0) {};
		\node [style=black] (1) at (1, 0) {};
		\node [style=black] (2) at (2, 0) {};
		\node [style=black] (3) at (0, 1) {};
		\node [style=black] (4) at (1, 1) {};
		\node [style=black] (5) at (2, 1) {};
		\node [style=black] (6) at (1, 3) {};
		\node [style=black] (7) at (4, 0) {};
		\node [style=black] (8) at (5, 0) {};
		\node [style=black] (9) at (7, 0) {};
		\node [style=black] (10) at (4, 1) {};
		\node [style=black] (11) at (5, 1) {};
		\node [style=black] (12) at (7, 1) {};
		\node [style=black] (13) at (5.5, 3) {};
		\node [style=black] (14) at (9, 0) {};
		\node [style=black] (15) at (10, 0) {};
		\node [style=black] (16) at (12, 0) {};
		\node [style=black] (17) at (9, 1) {};
		\node [style=black] (18) at (10, 1) {};
		\node [style=black] (19) at (12, 1) {};
		\node [style=black] (20) at (10.5, 3) {};
		\node [style=black] (21) at (5.5, 5) {};
		\node [style=black] (22) at (0, -1) {};
		\node [style=black] (23) at (0, -2) {};
		\draw node[fill,circle, inner sep=1] at (5.7,0.5) {};
		\draw node[fill,circle, inner sep=1] at (6,0.5) {};
		\draw node[fill,circle, inner sep=1] at (6.3,0.5) {};
		\draw node[fill,circle, inner sep=1] at (10.7,0.5) {};
		\draw node[fill,circle, inner sep=1] at (11,0.5) {};
		\draw node[fill,circle, inner sep=1] at (11.3,0.5) {};
		
        \draw [decorate,decoration={brace,amplitude=5pt,mirror,raise=4ex}] (4,0) -- (7,0) node[midway,yshift=-3em]{$k$ times};
        
        \draw [decorate,decoration={brace,amplitude=5pt,mirror,raise=4ex}] (9,0) -- (12,0) node[midway,yshift=-3em]{$k+3$ times};
		\draw (3) to (0);
		\draw (4) to (1);
		\draw (5) to (2);
		\draw (6) to (5);
		\draw (6) to (4);
		\draw (6) to (3);
		\draw (10) to (7);
		\draw (11) to (8);
		\draw (12) to (9);
		\draw (13) to (12);
		\draw (13) to (11);
		\draw (13) to (10);
		\draw (20) to (17);
		\draw (20) to (18);
		\draw (20) to (19);
		\draw (17) to (14);
		\draw (18) to (15);
		\draw (19) to (16);
		\draw (21) to (20);
		\draw (21) to (13);
		\draw (21) to (6);
		\draw (22) to (0);
		\draw (23) to (22);
		\node at (5.5,5.5) {$v_0$};
		\node at (0.3,3) {$v_1$};
		\node at (4.8,3) {$v_2$};
		\node at (9.8,3) {$v_3$};
\end{tikzpicture}

    \caption{An illustration of the 3*,k,k+3 structure of trees that have non-log-concave independence polynomials. }
    \label{fig:tree_3*_k_K+3}
\end{figure}

\begin{lemma}
    All trees of the $3^*,k,k+3$ structure, where $k \geq 4$, have non-log-concave independence polynomials.  
\end{lemma}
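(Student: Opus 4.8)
The plan is to mimic exactly the computation used in the preceding lemmas and theorems for the $3^*,k,k+j$ structures, now with $j=3$. First I would root the tree at the center $v_0$ and apply the recursion $I(G;x) = I(G-v_0;x) + x\cdot I(G-N[v_0];x)$. Removing $v_0$ disconnects the tree into three pendant subtrees hanging from $v_1,v_2,v_3$; by the product formula \eqref{eq:independent-polynomial-graphs} the first term factors as $A\cdot B\cdot C$, where $A = (2x+1)^{k+3}+x(x+1)^{k+3}$ comes from $v_3$ with its $k+3$ copies of $K_2$, $B = (2x+1)^{k}+x(x+1)^{k}$ comes from $v_2$ with its $k$ copies of $K_2$, and $C = (2x+1)^2(3x^2+4x+1)+x(x+1)^2(x^2+3x+1)$ is the unchanged "$3^*$" factor from $v_1$ (whose value $x^5+17x^4+36x^3+\dots$ is already recorded in the proof of the $3^*,k,k+2$ lemma). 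Removing $N[v_0]$ leaves $2k+4$ isolated $K_2$'s together with the $P_4$ attached to $v_1$ broken into a $P_2$ and two isolated vertices (or whatever the $3^*$ piece contributes after deleting $v_1$), giving the second term $x\cdot(2x+1)^{2k+4}\cdot(3x^2+4x+1)$ exactly as in the $k+2$ case but with $2k+4$ replaced appropriately for the extra $K_2$ — here it becomes $x\cdot(2x+1)^{2k+5}\cdot(3x^2+4x+1)$.

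Next I would extract the top three coefficients of $I(G;x)$. The degree is $\alpha(G) = (k+3)+k+ \deg C' = 2k+10$ where the leading term of the $3^*$ factor has degree $5$; so the three relevant coefficients are those of $x^{2k+10}$, $x^{2k+9}$, $x^{2k+8}$. The coefficient of $x^{2k+10}$ is $1$. For $x^{2k+9}$ I would enumerate the three ways to drop one degree — lowering the $A$-exponent by one (coefficient $2^{k+3}+k+3$), the $B$-exponent by one (coefficient $2^k+k$), or the $C$-exponent by one (coefficient $17$) — and sum, obtaining something of the form $c_1\cdot 2^k + 2k + c_2$. For $x^{2k+8}$ I would enumerate the six ways to drop two degrees across $A\cdot B\cdot C$, using the already-known next coefficients $(k+3)\cdot 2^{k+2}+\binom{k+3}{2}$ of $A$, $(k\cdot 2^{k-1}+\binom{k}{2})$ of $B$, and $36$ of $C$, and then add the contribution of the second term, which at $x^{2k+8}$ is the coefficient $3\cdot 2^{2k+5}$ from $x\cdot(2x+1)^{2k+5}\cdot(3x^2+4x+1)$ (the $x\cdot 4x\cdot (2x)^{2k+5}$ plus $x\cdot 1\cdot$ nothing — I would double-check which monomials of $(3x^2+4x+1)$ land in degree $2k+8$). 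This yields a closed form polynomial in $k$ and $2^k$ and $4^k$, as in the earlier proofs.

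Finally I would verify non-log-concavity at the $x^{2k+9}$ position, i.e. check
\[
\bigl(\text{coeff of }x^{2k+9}\bigr)^2 < 1\cdot\bigl(\text{coeff of }x^{2k+8}\bigr).
\]
Both sides are explicit expressions in $k$, $2^k$, $4^k$; the right side has a dominant $c\cdot 4^k$ term while the left side's square produces roughly $(c_1 2^k)^2 = c_1^2 4^k$, so the inequality reduces to comparing these leading constants plus lower-order corrections, and I would argue (as the authors do elsewhere) that the crossover happens at some $k_0 < 4$, hence the inequality holds for all $k\ge 4$. I expect the main obstacle to be purely bookkeeping: correctly identifying every term of $C$ and of the second summand that contributes to degrees $2k+9$ and $2k+8$ (in particular whether the second term reaches down to $x^{2k+9}$ at all, and its exact coefficient at $x^{2k+8}$), since an off-by-one in the exponent of $(2x+1)$ or a missed cross-term in $(3x^2+4x+1)$ would throw off the final inequality; once the three coefficients are pinned down, establishing the strict inequality for $k\ge 4$ is a routine estimate on a function dominated by its $4^k$ term. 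I would also note explicitly why the threshold is $k\ge 4$ here rather than $k\ge 3$ as in the $k+2$ case — presumably the extra $K_2$ shifts the real crossover point just past $3$.
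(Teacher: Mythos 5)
Your proposal follows essentially the same route as the paper's proof: delete $v_0$, factor $I(G-v_0)$ as $A\cdot B\cdot C$ with the same three factors, add the second term $x(3x^2+4x+1)(2x+1)^{2k+5}$, extract the top three coefficients ($1$, $9\cdot 2^k+2k+20$, and the degree-$(2k+8)$ coefficient whose dominant term is $\tfrac{13\cdot 4^{k+2}}{2}=104\cdot 4^k$ versus $81\cdot 4^k$ from the square), and locate the crossover near $k\approx 3.77$. All the ingredients you name match the paper's computation — indeed you correct a typo in the paper, which displays $(2x+1)^{2k+4}$ for the second term but computes with the exponent $2k+5$ as you do — so the remaining work is only the bookkeeping you already flag.
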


\begin{proof}
Let us compute the independence polynomial of a tree having $3^*,k,k+3$ structure, and choose $v_0$ to be the first vertex to remove from the graph.
\[I(G)_{v_0} = I(G-v_0)+x \cdot I(G-N[v_0])\]
Expand the first term in that sum:
\[
\begin{split}
I(G-v_0) &= \!\begin{multlined}[t]
[(2x+1)^{k+3}+x(x+1)^{k+3}] \cdot [(2x+1)^k+x(x+1)^k] \cdot \\
[(2x+1)^2(3x^2+4x+1)+x(x+1)^2(x^2+3x+1)]
\end{multlined}\\
&= \!\begin{multlined}[t]
\bigg[\sum_{i=0}^{k+3} \binom{k+3}{i} (2x)^{i} +x\sum_{i=0}^{k+3} \binom{k+3}{i} x^{i}\bigg] 
\cdot \\ \bigg[\sum_{i=0}^{k} \binom{k}{i} (2x)^{i} +x\sum_{i=0}^{k} \binom{k}{i} x^{i}\bigg] \cdot \\
[(2x+1)^2(3x^2+4x+1)+x(x+1)^2(x^2+3x+1)]
\end{multlined}\\
&= \!\begin{multlined}[t]
[x^{k+4}+(2^{k+3}+k+3)x^{k+3}+((k+3)\cdot2^{k+2}+\\\frac{(k+2)(k+3)}{2})x^{k+2}+\dots]
\cdot \\ [x^{k+1}+(2^k+k)x^k+(k\cdot2^{k-1}+\frac{k(k-1)}{2})x^{k-1}+\dots] \cdot \\
[x^5 + 17 x^4 + 36 x^3 + \dots]
\end{multlined}\\
\end{split}
\]

We can divide the first term expression into three factors $A \cdot B \cdot C$ such that:
\begin{itemize}
    \item $A = [x^{k+4}+(2^{k+3}+k+3)x^{k+3}+((k+3)\cdot2^{k+2}+\frac{(k+2)(k+3)}{2})x^{k+2}+\dots]$
    \item $B = [x^{k+1}+(2^k+k)x^k+(k\cdot2^{k-1}+\frac{k(k-1)}{2})x^{k-1}+\dots]$
    \item $C = [x^5 + 17 x^4 + 36 x^3 + \dots]$
\end{itemize}

Expand the second term in that sum:
\[
\begin{split}
x\cdot I(G-N[v_0]) &= x \cdot (2x+1)^{2k+4}\cdot(3x^2+4x+1)\\
&= x(3x^2+4x+1) \cdot \bigg[\sum_{i=0}^{2k+4} \binom{2k+4}{i} (2x)^{i}]\\
&= x(3x^2+4x+1) \cdot [(2x)^{2k+4}+\dots]\\
&= 3\cdot2^{2 k + 5}x^{2 k + 8}+\dots
\end{split}
\]

The highest exponent that one can reach is $2k+10$. We obtain it by taking the highest exponent from every factor, i.e., from factor $A$ we choose $x^{k+4}$, from factor $B$ we choose $x^{k+1}$, while from $C$ factor we choose $x^5$, so 
\[
x^{n+4} \cdot x^{n+1} \cdot x^5 = x^{2k+10},
\]
where the final coefficient is equal to $1$.\\

To calculate the coefficient of $x^{2k+9}$ we have 3 following options:
\begin{itemize}
    \item multiply $x^{k+4}$ from $A$, $x^{k+1}$ from $B$ and $17 x^4$ from $C$
    \item multiply $x^{k+4}$ from $A$, $(2^k+k)x^k$ from $B$ and $x^5$ from $C$
    \item multiply $(2^{k+3}+k+3)x^{k+3}$ from $A$, $x^{k+1}$ from $B$ and $x^5$ from $C$
\end{itemize}

\[
\begin{multlined}
x^{k+4} \cdot x^{k+1} \cdot 17 x^4+
x^{k+4} \cdot (2^k+k)x^k \cdot x^5+
(2^{k+3}+k+3)x^{k+3} \cdot x^{k+1} \cdot x^5
 \\= (2 k + 9\cdot2^k + 20) x^{2 k + 9},
\end{multlined}
\]

where the final coefficient is equal to $2 k + 9\cdot2^k + 20$.\\

To calculate the coefficient of $x^{2k+8}$ we have 6 following options:
\begin{itemize}
    \item multiply $x^{k+4}$ from $A$, $x^{k+1}$ from $B$ and $36 x^3$ from $C$
    \item multiply $x^{k+4}$ from $A$, $(2^k+k)x^k$ from $B$ and $17 x^4$ from $C$
    \item multiply $x^{k+4}$ from $A$, $(k\cdot2^{k-1}+\frac{k(k-1)}{2})x^{k-1}$ from $B$ and $x^5$ from $C$
    \item multiply $(2^{k+3}+k+3)x^{k+3}$ from $A$, $x^{k+1}$ from $B$ and $17 x^4$ from $C$
    \item multiply $(2^{k+3}+k+3)x^{k+3}$ from $A$, $(2^k+k)x^k$ from $B$ and $x^5$ from $C$
    \item multiply $((k+3)\cdot2^{k+2}+\frac{(k+2)(k+3)}{2})x^{k+2}$ from $A$, $x^{k+1}$ from $B$ and $x^5$ from $C$
\end{itemize} 
and also we add the coefficient $3\cdot2^{2 k + 5}$ from the second term.

\[
\begin{multlined}
x^{k+4} \cdot x^{k+1} \cdot 36 x^3 + 
x^{k+4} \cdot (2^k+k)x^k \cdot 17 x^4 + \\
x^{k+4} \cdot (k\cdot2^{k-1}+\frac{k(k-1)}{2})x^{k-1} \cdot x^5 +
(2^{k+3}+k+3)x^{k+3} \cdot x^{k+1} \cdot 17 x^4 + \\
(2^{k+3}+k+3)x^{k+3} \cdot (2^k+k)x^k \cdot x^5 + \\
((k+3)\cdot2^{k+2}+\frac{(k+2)(k+3)}{2})x^{k+2} \cdot x^{k+1} \cdot x^5
+ 3\cdot2^{2 k + 5}x^{2 k +8} \\
=\frac{1}{2} (k (4 k + 27\cdot2^k + 78) + 13\cdot4^{k + 2} + 21 \cdot2^{k + 4} + 180) x^{2 k + 8},
\end{multlined}
\]

where the final coefficient is equal to $\frac{1}{2} (k (4 k + 27\cdot2^k + 78) + 13\cdot4^{k + 2} + 21 \cdot2^{k + 4} + 180)$.\\

Thus the independence polynomial is:

%\begin{flushleft}
\begin{multlined}[t]
I(G) = x^{2k+10} + 
(2 k + 9\cdot2^k + 20) x^{2 k + 9} +\\
\frac{1}{2} (k (4 k + 27\cdot2^k + 78) + 13\cdot4^{k + 2} + 21 \cdot2^{k + 4} + 180) x^{2 k + 8} +
\dots
\end{multlined}
%\end{flushleft}

Now, let us prove the non-log-concavity of the $x^{2k+9}$ term in the independence polynomial:
\[
(2 k + 9\cdot2^k + 20)^2 < 1 \cdot \frac{1}{2} (k (4 k + 27\cdot2^k + 78) + 13\cdot4^{k + 2} + 21 \cdot2^{k + 4} + 180)
\]
The left-hand and right-hand sides are equal to $k \approx 3.76626$ so the left-hand side is smaller than the right-hand side from $k=4$ and above.

\end{proof}

\subsection{3*,k,k structure}

To further extend the structure, we also considered the base case where the last two clusters are equal. The resulting structure is described below and illustrated in Figure \ref{fig:tree_3*_k_K}:

\begin{itemize}
    \item the tree has one center, denoted $v_0$ that is connected to three vertices $v_1, v_2, v_3$ 
    \item $v_1$ is connected to $P_4 \cup K_2 \cup K_2$
    \item $v_2$ is connected to $K_2 \cup \dots \cup K_2$, $k$ times
    \item $v_3$ is connected to another $K_2 \cup \dots \cup K_2$, $k$ times
\end{itemize}

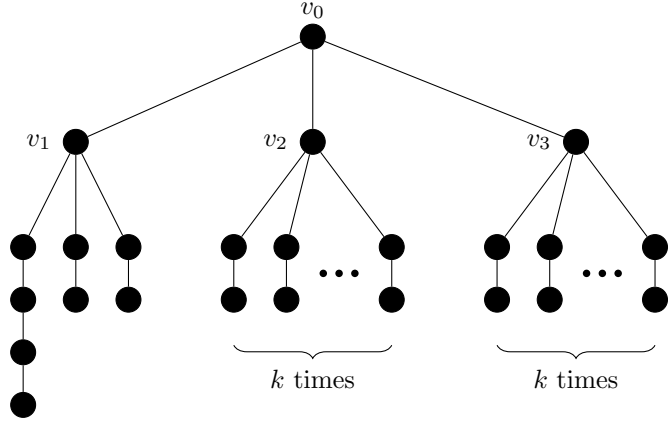
\begin{figure}[H]
    \centering
    \begin{tikzpicture}[scale=0.7]
        \tikzstyle{black}=[fill=black, draw=black, shape=circle]
        
		\node [style=black] (0) at (0, 0) {};
		\node [style=black] (1) at (1, 0) {};
		\node [style=black] (2) at (2, 0) {};
		\node [style=black] (3) at (0, 1) {};
		\node [style=black] (4) at (1, 1) {};
		\node [style=black] (5) at (2, 1) {};
		\node [style=black] (6) at (1, 3) {};
		\node [style=black] (7) at (4, 0) {};
		\node [style=black] (8) at (5, 0) {};
		\node [style=black] (9) at (7, 0) {};
		\node [style=black] (10) at (4, 1) {};
		\node [style=black] (11) at (5, 1) {};
		\node [style=black] (12) at (7, 1) {};
		\node [style=black] (13) at (5.5, 3) {};
		\node [style=black] (14) at (9, 0) {};
		\node [style=black] (15) at (10, 0) {};
		\node [style=black] (16) at (12, 0) {};
		\node [style=black] (17) at (9, 1) {};
		\node [style=black] (18) at (10, 1) {};
		\node [style=black] (19) at (12, 1) {};
		\node [style=black] (20) at (10.5, 3) {};
		\node [style=black] (21) at (5.5, 5) {};
		\node [style=black] (22) at (0, -1) {};
		\node [style=black] (23) at (0, -2) {};
		\draw node[fill,circle, inner sep=1] at (5.7,0.5) {};
		\draw node[fill,circle, inner sep=1] at (6,0.5) {};
		\draw node[fill,circle, inner sep=1] at (6.3,0.5) {};
		\draw node[fill,circle, inner sep=1] at (10.7,0.5) {};
		\draw node[fill,circle, inner sep=1] at (11,0.5) {};
		\draw node[fill,circle, inner sep=1] at (11.3,0.5) {};
		
        \draw [decorate,decoration={brace,amplitude=5pt,mirror,raise=4ex}] (4,0) -- (7,0) node[midway,yshift=-3em]{$k$ times};
        
        \draw [decorate,decoration={brace,amplitude=5pt,mirror,raise=4ex}] (9,0) -- (12,0) node[midway,yshift=-3em]{$k$ times};
		\draw (3) to (0);
		\draw (4) to (1);
		\draw (5) to (2);
		\draw (6) to (5);
		\draw (6) to (4);
		\draw (6) to (3);
		\draw (10) to (7);
		\draw (11) to (8);
		\draw (12) to (9);
		\draw (13) to (12);
		\draw (13) to (11);
		\draw (13) to (10);
		\draw (20) to (17);
		\draw (20) to (18);
		\draw (20) to (19);
		\draw (17) to (14);
		\draw (18) to (15);
		\draw (19) to (16);
		\draw (21) to (20);
		\draw (21) to (13);
		\draw (21) to (6);
		\draw (22) to (0);
		\draw (23) to (22);
		\node at (5.5,5.5) {$v_0$};
		\node at (0.3,3) {$v_1$};
		\node at (4.8,3) {$v_2$};
		\node at (9.8,3) {$v_3$};
\end{tikzpicture}

    \caption{An illustration of the 3*,k,k structure of trees that have non-log-concave independence polynomials. }
    \label{fig:tree_3*_k_K}
\end{figure}

\begin{lemma}
    All trees of the $3^*,k,k$ structure, where $k \geq 4$, have non-log-concave independence polynomials.  
\end{lemma}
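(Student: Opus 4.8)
The plan is to follow verbatim the strategy of the three preceding $3^*$ lemmas: expand $I(G;x)$ at the center $v_0$ via $I(G;x)=I(G-v_0;x)+x\cdot I(G-N[v_0];x)$, factor the first summand over the three branches hanging at $v_1,v_2,v_3$, and then compute only the three highest coefficients of $I(G;x)$ — exhibiting a single index at which $a_k^2\ge a_{k-1}a_{k+1}$ fails is all that is required.

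First I would record the branch polynomials. The branch at $v_1$ (a vertex joined to $P_4\cup K_2\cup K_2$) is resolved by applying the recursion once more at $v_1$, and it produces exactly the factor already met for the other starred families,
\[
C=(2x+1)^2(3x^2+4x+1)+x(x+1)^2(x^2+3x+1)=x^5+17x^4+36x^3+28x^2+9x+1 .
\]
Each of the branches at $v_2$ and at $v_3$ is a vertex joined to $k$ copies of $K_2$, contributing $A:=(2x+1)^k+x(x+1)^k$, whose three leading terms are $x^{k+1}+(2^k+k)x^k+\bigl(k\cdot2^{k-1}+\tfrac{k(k-1)}{2}\bigr)x^{k-1}+\dots$. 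Hence $I(G-v_0;x)=A^2C$, of degree $2k+7$. Deleting $N[v_0]$ leaves $P_4$ together with $2k+2$ copies of $K_2$, so $x\cdot I(G-N[v_0];x)=x(3x^2+4x+1)(2x+1)^{2k+2}$, a polynomial of degree $2k+5$ whose leading coefficient is $3\cdot2^{2k+2}$; in particular it influences only the coefficients of degree $\le 2k+5$.

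Next I would read off the top three coefficients of $I(G;x)$. The coefficient of $x^{2k+7}$ is $1$. The coefficient of $x^{2k+6}$ comes solely from $A^2C$, from the products $x^k\!\cdot\!x^{k+1}\!\cdot\!x^5$ (twice) and $x^{k+1}\!\cdot\!x^{k+1}\!\cdot\!x^4$, giving $2(2^k+k)+17=2^{k+1}+2k+17$. The coefficient of $x^{2k+5}$ gathers six products from $A^2C$ — the $x^{k-1}$-term of one copy of $A$ against the tops of the other two factors (twice), the squared $x^k$-term, the $x^k$-term of one copy of $A$ against $17x^4$ of $C$ (twice), and the two tops of $A$ against $36x^3$ of $C$ — together with the leading coefficient $3\cdot2^{2k+2}$ of the second summand; after simplification this equals $13\cdot4^k+(3k+34)2^k+2k^2+33k+36$. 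Thus
\[
\begin{multlined}[t]
I(G;x)=x^{2k+7}+(2^{k+1}+2k+17)x^{2k+6}+\\
\bigl(13\cdot4^k+(3k+34)2^k+2k^2+33k+36\bigr)x^{2k+5}+\dots .
\end{multlined}
\]

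Since every coefficient is positive and $2k+6$ is the largest admissible index in the log-concavity inequality, it remains only to verify
\[
(2^{k+1}+2k+17)^2<13\cdot4^k+(3k+34)2^k+2k^2+33k+36 .
\]
The left side grows like $4\cdot4^k$ and the right side like $13\cdot4^k$, so the inequality holds for all large $k$; the two sides agree for a single value of $k$ between $3$ and $4$ (at $k=3$ the left side $39^2=1521$ exceeds the right side $1329$, while at $k=4$ the left side $57^2=3249$ lies below the right side $4264$), whence the inequality holds for every integer $k\ge4$, which is the assertion. The only points that demand care are the bookkeeping of the six monomial products feeding the $x^{2k+5}$ coefficient and the observation that the $x\cdot I(G-N[v_0];x)$ term lands precisely in that coefficient; everything else simply reproduces the pattern of the earlier lemmas.
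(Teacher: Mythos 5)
Your proposal is correct and follows essentially the same route as the paper: expand at $v_0$, factor $I(G-v_0)$ as $A\cdot B\cdot C$ with $A=B=(2x+1)^k+x(x+1)^k$ and $C=x^5+17x^4+36x^3+\dots$, extract the three leading coefficients $1$, $2^{k+1}+2k+17$, and $13\cdot4^k+(3k+34)2^k+2k^2+33k+36$, and verify the failure of log-concavity at the $x^{2k+6}$ coefficient for $k\ge4$. (You even correct a typo in the paper, which writes $(2x+1)^{2k+4}$ for the second summand but computes its leading coefficient $3\cdot2^{2k+2}$ as if the exponent were $2k+2$, as you have it.)
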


\begin{proof}
Let us compute the independence polynomial of a tree having $3^*,k,k$ structure, and choose $v_0$ to be the first vertex to remove from the graph.
\[I(G)_{v_0} = I(G-v_0)+x \cdot I(G-N[v_0])\]
Expand the first term in that sum:
\[
\begin{split}
I(G-v_0) &= \!\begin{multlined}[t]
[(2x+1)^{k}+x(x+1)^{k}] \cdot [(2x+1)^k+x(x+1)^k] \cdot \\
[(2x+1)^2(3x^2+4x+1)+x(x+1)^2(x^2+3x+1)]
\end{multlined}\\
&= \!\begin{multlined}[t]
\bigg[\sum_{i=0}^{k} \binom{k}{i} (2x)^{i} +x\sum_{i=0}^{k} \binom{k}{i} x^{i}\bigg] 
\cdot \bigg[\sum_{i=0}^{k} \binom{k}{i} (2x)^{i} +x\sum_{i=0}^{k} \binom{k}{i} x^{i}\bigg] \cdot \\
[(2x+1)^2(3x^2+4x+1)+x(x+1)^2(x^2+3x+1)]
\end{multlined}\\
&= \!\begin{multlined}[t]
[x^{k+1}+(2^k+k)x^k+(k\cdot2^{k-1}+\frac{k(k-1)}{2})x^{k-1}+\dots] \cdot \\
[x^{k+1}+(2^k+k)x^k+(k\cdot2^{k-1}+\frac{k(k-1)}{2})x^{k-1}+\dots] \cdot \\
[x^5 + 17 x^4 + 36 x^3 + \dots]
\end{multlined}\\
\end{split}
\]

We can divide the first term expression into three factors $A \cdot B \cdot C$ such that:
\begin{itemize}
    \item $A = [x^{k+1}+(2^k+k)x^k+(k\cdot2^{k-1}+\frac{k(k-1)}{2})x^{k-1}+\dots]$
    \item $B = [x^{k+1}+(2^k+k)x^k+(k\cdot2^{k-1}+\frac{k(k-1)}{2})x^{k-1}+\dots]$
    \item $C = [x^5 + 17 x^4 + 36 x^3 + \dots]$
\end{itemize}

Expand the second term in that sum:
\[
\begin{split}
x\cdot I(G-N[v_0]) &= x \cdot (2x+1)^{2k+4}\cdot(3x^2+4x+1)\\
&= x(3x^2+4x+1) \cdot \bigg[\sum_{i=0}^{2k+4} \binom{2k+4}{i} (2x)^{i}]\\
&= x(3x^2+4x+1) \cdot [(2x)^{2k+1}+\dots]\\
&= 3\cdot2^{2 k + 2}x^{2 k + 5}+\dots
\end{split}
\]

The highest exponent that one can reach is $2k+7$. We obtain it by taking the highest exponent from every factor, i.e., from factor $A$ we choose $x^{k+1}$, from factor $B$ we choose $x^{k+1}$, while from $C$ factor we choose $x^5$, so 
\[
x^{n+1} \cdot x^{n+1} \cdot x^5 = x^{2k+7},
\]
where the final coefficient is equal to $1$.\\

To calculate the coefficient of $x^{2k+6}$ we have 3 following options:
\begin{itemize}
    \item multiply $x^{k+1}$ from $A$, $x^{k+1}$ from $B$ and $17 x^4$ from $C$
    \item multiply $x^{k+1}$ from $A$, $(2^k+k)x^k$ from $B$ and $x^5$ from $C$
    \item multiply $(2^k+k)x^{k}$ from $A$, $x^{k+1}$ from $B$ and $x^5$ from $C$
\end{itemize}

\[
\begin{multlined}
x^{k+1} \cdot x^{k+1} \cdot 17 x^4+
x^{k+1} \cdot (2^k+k)x^k \cdot x^5+
(2^{k}+k)x^{k} \cdot x^{k+1} \cdot x^5
 \\= (2 k + 2^{k + 1} + 17) x^{2 k + 6},
\end{multlined}
\]

where the final coefficient is equal to $2 k + 2^{k + 1} + 17$.\\

To calculate the coefficient of $x^{2k+5}$ we have 6 following options:
\begin{itemize}
    \item multiply $x^{k+1}$ from $A$, $x^{k+1}$ from $B$ and $36 x^3$ from $C$
    \item multiply $x^{k+1}$ from $A$, $(2^k+k)x^k$ from $B$ and $17 x^4$ from $C$
    \item multiply $x^{k+1}$ from $A$, $(k\cdot2^{k-1}+\frac{k(k-1)}{2})x^{k-1}$ from $B$ and $x^5$ from $C$
    \item multiply $(2^{k}+k)x^{k}$ from $A$, $x^{k+1}$ from $B$ and $17 x^4$ from $C$
    \item multiply $(2^{k}+k)x^{k}$ from $A$, $(2^k+k)x^k$ from $B$ and $x^5$ from $C$
    \item multiply $(k\cdot2^{k-1}+\frac{k(k-1)}{2})x^{k-1}$ from $A$, $x^{k+1}$ from $B$ and $x^5$ from $C$
\end{itemize} 
and also we add the coefficient $3\cdot2^{2 k + 2}$ from the second term.

\[
\begin{multlined}
x^{k+1} \cdot x^{k+1} \cdot 36 x^3 + 
x^{k+1} \cdot (2^k+k)x^k \cdot 17 x^4 + \\
x^{k+1} \cdot (k\cdot2^{k-1}+\frac{k(k-1)}{2})x^{k-1} \cdot x^5 +
(2^k+k)x^k \cdot x^{k+1} \cdot 17 x^4 + \\
(2^k+k)x^k \cdot (2^k+k)x^k \cdot x^5 + 
(k\cdot2^{k-1}+\frac{k(k-1)}{2})x^{k-1} \cdot x^{k+1} \cdot x^5 +\\
3\cdot2^{2 k + 2}x^{2 k +5}\\
=(2\cdot k^2 + 33 k + 13\cdot 4^k + 2^k\cdot(3 k + 34) + 36) x^{2 k + 5},
\end{multlined}
\]

where the final coefficient is equal to $2\cdot k^2 + 33 k + 13\cdot4^k + 2^k\cdot(3 k + 34) + 36$.\\

Thus the independence polynomial is:

%\begin{flushleft}
\begin{multlined}[t]
I(G) = x^{2k+7} + 
(2 k + 2^{k + 1} + 17) x^{2 k + 6} +\\
(2 \cdot k^2 + 33 k + 13 \cdot 4^k + 2^k \cdot (3 k + 34) + 36) x^{2 k + 5} +
\dots
\end{multlined}
%\end{flushleft}

Now, let us prove the non-log-concavity of the $x^{2k+6}$ term in the independence polynomial:
\[
(2 k + 2^{k + 1} + 17)^2 < 1 \cdot (2 \cdot k^2 + 33 k + 13 \cdot 4^k + 2^k \cdot (3 k + 34) + 36)
\]
The left-hand and right-hand sides are equal to $k \approx 3.31871$ so the left-hand side is smaller than the right-hand side from $k=4$ and above.

\end{proof}

\section{Conclusion}

In this paper, we challenge the prevailing conjecture on the independence polynomials of trees, which suggests that they are unimodal and potentially log-concave. Instead, we make use of the two trees with 26 vertices whose independence polynomials are not log-concave, and introduce a number of infinite families of trees based on those two trees, whose independence polynomials are not log-concave as well.\\

Our findings suggest several exciting directions for future research, including the investigation of additional families of trees with structures of the form $3,k,k+j$ where $j \geq 3$ and $3^*,k,k+j$, where $j \geq 4$. There are also trees not belonging to those families. For example, a tree of order 28 does not belong to any infinite family like the previous ones we found. You can see it in Figure \ref{fig:tree_3*_3_5}.

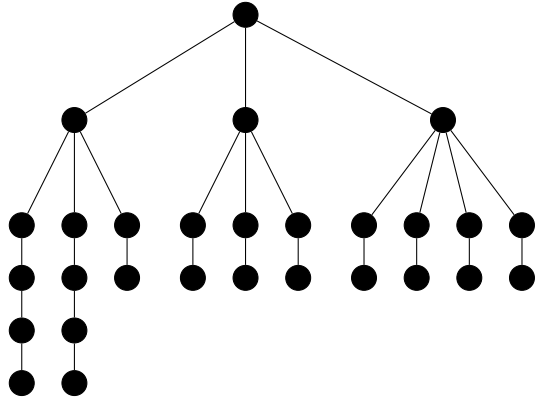
\begin{figure}[H]
    \centering
    \begin{tikzpicture}[scale=0.7]
        \tikzstyle{black}=[fill=black, draw=black, shape=circle]
		\node [style=black] (0) at (0, 2) {};
		\node [style=black] (1) at (1, 2) {};
		\node [style=black] (2) at (2, 2) {};
		\node [style=black] (3) at (0, 3) {};
		\node [style=black] (4) at (1, 3) {};
		\node [style=black] (5) at (2, 3) {};
		\node [style=black] (6) at (1, 5) {};
		\node [style=black] (7) at (3.25, 2) {};
		\node [style=black] (8) at (4.25, 2) {};
		\node [style=black] (9) at (5.25, 2) {};
		\node [style=black] (10) at (3.25, 3) {};
		\node [style=black] (11) at (4.25, 3) {};
		\node [style=black] (12) at (5.25, 3) {};
		\node [style=black] (13) at (4.25, 5) {};
		\node [style=black] (14) at (6.5, 2) {};
		\node [style=black] (15) at (7.5, 2) {};
		\node [style=black] (16) at (8.5, 2) {};
		\node [style=black] (17) at (6.5, 3) {};
		\node [style=black] (18) at (7.5, 3) {};
		\node [style=black] (19) at (8.5, 3) {};
		\node [style=black] (20) at (8, 5) {};
		\node [style=black] (21) at (4.25, 7) {};
		\node [style=black] (22) at (0, 1) {};
		\node [style=black] (23) at (0, 0) {};
		\node [style=black] (24) at (9.5, 2) {};
		\node [style=black] (25) at (9.5, 3) {};
		\node [style=black] (26) at (1, 1) {};
		\node [style=black] (27) at (1, 0) {};

		\draw (3) to (0);
		\draw (4) to (1);
		\draw (5) to (2);
		\draw (6) to (5);
		\draw (6) to (4);
		\draw (6) to (3);
		\draw (10) to (7);
		\draw (11) to (8);
		\draw (12) to (9);
		\draw (13) to (12);
		\draw (13) to (11);
		\draw (13) to (10);
		\draw (20) to (17);
		\draw (20) to (18);
		\draw (20) to (19);
		\draw (17) to (14);
		\draw (18) to (15);
		\draw (19) to (16);
		\draw (21) to (20);
		\draw (21) to (13);
		\draw (21) to (6);
		\draw (0) to (22);
		\draw (22) to (23);
		\draw (20) to (25);
		\draw (25) to (24);
		\draw (1) to (26);
		\draw (27) to (26);

\end{tikzpicture}

    \caption{An exceptional tree of order 28 that has non-log-concave independence polynomial.}
    \label{fig:tree_3*_3_5}
\end{figure}

\[
I(T;x) =\!\begin{multlined}[t]
     x^{15} + 55x^{14} + 3139x^{13} + 24020x^{12} + 86526x^{11} + 187731x^{10} + \\272268x^9 +278417x^8 + 206422x^7 + 112284x^6 + 44772x^5 + 12910x^4 \\+ 2613x^3 + 351x^2 + 28x + 1,
    \end{multlined}
\]
where the non-log-concavity is demonstrated by the coefficient of $x^{14}$: \\$55^2=3025<3139$.\\

In all our previous counterexamples the log-concavity was violated at the $\alpha(G)-1$ coefficient.
Nevertheless, there are trees with broken log-concavity at the $\alpha(G)-2$ coefficient. For instance, see Figure \ref{fig:alpha-2_coefficient}.\\
\begin{figure}[H]
    \centering
    \begin{tikzpicture}[scale=0.7]
        \tikzstyle{black}=[fill=black, draw=black, shape=circle]
        
		\node [style=black] (0) at (0, 2) {};
		\node [style=black] (1) at (1, 2) {};
		\node [style=black] (2) at (2, 2) {};
		\node [style=black] (3) at (0, 3) {};
		\node [style=black] (4) at (1, 3) {};
		\node [style=black] (5) at (2, 3) {};
		\node [style=black] (6) at (1, 5) {};
		\node [style=black] (7) at (3.25, 2) {};
		\node [style=black] (8) at (4.25, 2) {};
		\node [style=black] (9) at (5.25, 2) {};
		\node [style=black] (10) at (3.25, 3) {};
		\node [style=black] (11) at (4.25, 3) {};
		\node [style=black] (12) at (5.25, 3) {};
		\node [style=black] (13) at (4.75, 5) {};
		\node [style=black] (14) at (7.5, 2) {};
		\node [style=black] (15) at (8.5, 2) {};
		\node [style=black] (16) at (9.5, 2) {};
		\node [style=black] (17) at (7.5, 3) {};
		\node [style=black] (18) at (8.5, 3) {};
		\node [style=black] (19) at (9.5, 3) {};
		\node [style=black] (20) at (9, 5) {};
		\node [style=black] (21) at (6.875, 7) {};
		\node [style=black] (24) at (10.5, 2) {};
		\node [style=black] (25) at (10.5, 3) {};
		\node [style=black] (26) at (6.25, 2) {};
		\node [style=black] (27) at (6.25, 3) {};
  
		\node [style=black] (28) at (11.75, 2) {};
		\node [style=black] (29) at (11.75, 3) {};
		\node [style=black] (30) at (12.75, 2) {};
		\node [style=black] (31) at (12.75, 3) {};
		\node [style=black] (32) at (13.75, 2) {};
		\node [style=black] (33) at (13.75, 3) {};
		\node [style=black] (34) at (14.75, 2) {};
		\node [style=black] (35) at (14.75, 3) {};
		\node [style=black] (36) at (13.25, 5) {};

		\draw (3) to (0);
		\draw (4) to (1);
		\draw (5) to (2);
		\draw (6) to (5);
		\draw (6) to (4);
		\draw (6) to (3);
		\draw (10) to (7);
		\draw (11) to (8);
		\draw (12) to (9);
		\draw (13) to (12);
		\draw (13) to (11);
		\draw (13) to (10);
		\draw (20) to (17);
		\draw (20) to (18);
		\draw (20) to (19);
		\draw (17) to (14);
		\draw (18) to (15);
		\draw (19) to (16);
		\draw (21) to (20);
		\draw (21) to (13);
		\draw (21) to (6);
		\draw (20) to (25);
		\draw (25) to (24);
		\draw (13) to (27);
		\draw (27) to (26);
		\draw (28) to (29);
		\draw (30) to (31);
		\draw (32) to (33);
		\draw (34) to (35);
		\draw (36) to (29);
		\draw (36) to (31);
		\draw (36) to (33);
		\draw (36) to (35);
		\draw (36) to (21);
  
\end{tikzpicture}

    \caption{A tree with broken log-concavity at the $\alpha(G)-2$ coefficient.}
    \label{fig:alpha-2_coefficient}

\end{figure}
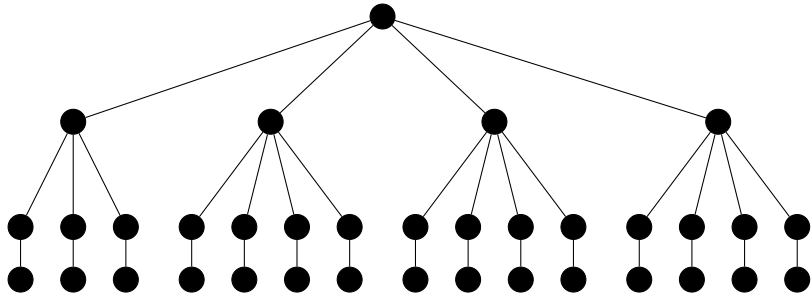

\[
I(T;x) =\!\begin{multlined}[t]
x^{19}+71x^{18}+1989x^{17}+60773x^{16}+458294x^{15}+1773294x^{14}\\+4355940x^{13}+7485954x^{12}+9489531x^{11}+9151478x^{10}+6835096x^9\\+3989058x^8+1822331x^7+648445x^6+177534x^5+36596x^4\\+5480x^3+561x^2+35x+1,
    \end{multlined}
\]
where the non-log-concavity is demonstrated by the coefficient of $x^{17}$: \\$1989^2=3956121<4314883=71\cdot 60773$.\\

It motivates the following.

\begin{conjecture}
    The log-concavity may be broken at the $\alpha(G)-k$ coefficient for arbitrarily $k \in \{1,2,\cdots, \alpha(G) -1\}$.
\end{conjecture}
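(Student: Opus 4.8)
The plan is to realize every value of $k$ by an explicit infinite family built by the same ``spider of brooms'' mechanism as above, but with $k+2$ legs instead of three. Fix $k\ge 1$ and an integer $m$ (to be taken large at the end), and let $T_{k,m}$ be the tree in which a center $v_0$ is joined to vertices $v_1,\dots,v_{k+2}$, and each $v_i$ is joined to one endpoint of each of $m$ disjoint copies of $K_2$; this is a tree on $1+(k+2)(2m+1)$ vertices, and taking in each leg the apex $v_i$ together with the far endpoint of its $m$ copies of $K_2$ shows $\alpha(T_{k,m})=(k+2)(m+1)=:\alpha$. Applying \eqref{eq:independent-polynomial-rec} at $v_0$ and then \eqref{eq:independent-polynomial-graphs}, one obtains, exactly as in the proofs above,
\[
I(T_{k,m};x)=\bigl[(2x+1)^m+x(x+1)^m\bigr]^{k+2}+x\,(2x+1)^{(k+2)m}.
\]
The crucial observation is that the ``bump'' summand $x(2x+1)^{(k+2)m}$ has degree $(k+2)m+1=\alpha-(k+1)$, so it affects none of $s_\alpha,\dots,s_{\alpha-k}$ and adds exactly $2^{(k+2)m}$ to $s_{\alpha-k-1}$ --- one slot below the coefficient whose log-concavity we intend to destroy.

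Next I would extract the top coefficients by passing to the reciprocal polynomial. Writing $f_m(x)=(2x+1)^m+x(x+1)^m$, its reciprocal is $\widetilde f_m(x):=x^{m+1}f_m(1/x)=x(x+2)^m+(x+1)^m$, so $s_{\alpha-j}=[x^{j}]\,\widetilde f_m(x)^{k+2}$ for $0\le j\le k$, and $s_{\alpha-k-1}=[x^{k+1}]\,\widetilde f_m(x)^{k+2}+2^{(k+2)m}$. Here $[x^{0}]\widetilde f_m=1$, $[x^{1}]\widetilde f_m=2^m+m$, and $[x^{i}]\widetilde f_m=\binom{m}{i-1}2^{m-i+1}+\binom{m}{i}$ for $i\ge 1$. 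Expanding the power as a sum over compositions $j=j_1+\dots+j_{k+2}$, the term with exactly $k$ parts equal to $1$ dominates, giving $s_{\alpha-k}=\binom{k+2}{2}2^{km}\bigl(1+o(1)\bigr)$ as $m\to\infty$; every other composition contributes a power of $2$ of exponent at most $(k-1)m$ (times a polynomial in $m$), so $s_{\alpha-k}\le D_k\,2^{km}$ for an explicit constant $D_k$ depending only on $k$ and all $m$ past a threshold depending only on $k$.

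Finally I would close the inequality $s_{\alpha-k}^{2}<s_{\alpha-k-1}\,s_{\alpha-k+1}$. Keeping a single term in each sum yields the crude but sufficient bounds $s_{\alpha-k+1}=[x^{k-1}]\widetilde f_m^{\,k+2}\ge\binom{k+2}{k-1}(2^m+m)^{k-1}\ge 2^{(k-1)m}$ and $s_{\alpha-k-1}\ge 2^{(k+2)m}$, hence
\[
s_{\alpha-k-1}\,s_{\alpha-k+1}\ \ge\ 2^{(2k+1)m}\qquad\text{while}\qquad s_{\alpha-k}^{2}\ \le\ D_k^{2}\,2^{2km}.
\]
Since $(2k+1)m-2km=m$, the inequality $s_{\alpha-k}^{2}<s_{\alpha-k-1}s_{\alpha-k+1}$ holds once $m>2\log_2 D_k$ and $m$ is past the earlier threshold, so any $m$ beyond an explicit $m_0(k)$ gives a tree violating log-concavity at the $\alpha-k$ coefficient. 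As $k$ was arbitrary, the conjecture follows.

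I expect the only real work to be the uniform upper bound $s_{\alpha-k}\le D_k\,2^{km}$: one must show that every composition of $k$ into $k+2$ nonnegative parts with fewer than $k$ positive parts contributes $o(2^{km})$, which reduces to $\binom{m}{i}\le m^{i}$ together with $m^{k}2^{(k-1)m}=o(2^{km})$, and then to pinning down $m_0(k)$ in the style of the ``$k\ge 4$'' thresholds above. The conceptual content is entirely the structural identity for $I(T_{k,m};x)$ and the observation that its low-degree bump $x(2x+1)^{(k+2)m}$ lands exactly at position $\alpha-k-1$.
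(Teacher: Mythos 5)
What you are proving is stated in the paper only as a conjecture: the authors offer no proof, just one sporadic example breaking log-concavity at $\alpha(G)-1$ (many, in fact) and a single tree of order $35$ breaking it at $\alpha(G)-2$. Your proposal, under the natural reading of the conjecture (for each fixed $k\ge 1$ there is a tree whose independence polynomial violates log-concavity at position $\alpha-k$), appears to settle it, and it does so by pushing the paper's own mechanism one step further: where the paper always uses three spider legs and tunes the leg sizes, you use $k+2$ equal legs of size $m$ and let $m\to\infty$. I checked the key identities: $I(T_{k,m};x)=[(2x+1)^m+x(x+1)^m]^{k+2}+x(2x+1)^{(k+2)m}$ follows from the two recursions at $v_0$, the bump term indeed has degree $(k+2)m+1=\alpha-(k+1)$ so it feeds only $s_{\alpha-k-1}$ and below, and the reciprocal-polynomial bookkeeping $[x^j]\widetilde f_m^{\,k+2}=s_{\alpha-j}$ for $j\le k$ with $\widetilde f_m=x(x+2)^m+(x+1)^m$ is right. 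The composition analysis is also sound: all coefficients are nonnegative, so the single all-ones composition gives the lower bounds $s_{\alpha-k+1}\ge 2^{(k-1)m}$ and $s_{\alpha-k-1}\ge 2^{(k+2)m}$ for free, while the upper bound $s_{\alpha-k}\le D_k2^{km}$ needs only $\binom{m}{i}\le m^i$ and the count of compositions, both uniform in $m$ once $m$ exceeds a threshold depending on $k$ alone; the resulting gap $2^{(2k+1)m}$ versus $D_k^2 2^{2km}$ is a full factor of $2^m$, so there is no delicacy in closing the inequality. (A quick numerical check at $k=1$, $m=4$ gives $s_{14}^2=3600<5410=s_{13}s_{15}$, consistent with your asymptotics.) The only work left to make this a complete proof is what you already identify: writing out the explicit $D_k$ and $m_0(k)$. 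Compared with the paper, which fixes the number of legs at three and can therefore only ever disturb the coefficient one below the top, your observation that adding legs pushes the bump $x(2x+1)^{(k+2)m}$ down to exactly position $\alpha-k-1$ is the genuinely new idea, and it converts the paper's closing conjecture into a theorem.
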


\end{document}